\documentclass[reqno, 11pt]{amsart}

\makeatletter
\@namedef{subjclassname@2020}{%
  \textup{2020} Mathematics Subject Classification}
\makeatother

\usepackage{ulem}

\usepackage{amsfonts, amsthm, amsmath, amssymb}
\usepackage{hyperref}
\hypersetup{colorlinks=false}

\usepackage{bbm}  

 \usepackage{tabu}

\usepackage[margin=1in]{geometry}

\usepackage{helvet}

\RequirePackage{mathrsfs} \let\mathcal\mathscr

\usepackage{hyperref}
\usepackage{dsfont}
\usepackage{upgreek}
\usepackage{mathabx,yfonts}
\usepackage{enumerate}

\numberwithin{equation}{section}

\newtheorem{theorem}{Theorem}[section] 
\newtheorem{lemma}[theorem]{Lemma}
\newtheorem{proposition}[theorem]{Proposition}
\newtheorem{corollary}[theorem]{Corollary}

\theoremstyle{definition}

\newtheorem{remark}[theorem]{Remark}

 \newtheorem*{notation}{Notation}

\renewcommand{\emph}[1]{\textit{#1}}

\renewcommand{\phi}{\varphi}

\renewcommand{\leq}{\leqslant}

\renewcommand{\geq}{\geqslant}

 \renewcommand{\b}{\mathbf{b}}

\DeclareSymbolFont{bbold}{U}{bbold}{m}{n}
\DeclareSymbolFontAlphabet{\mathbbold}{bbold}

\newcommand{\md}[1]{  \left(\textnormal{mod}\ #1\right)}
 
\renewcommand{\P}{\mathbb{P}}

\newcommand{\Z}{\mathbb{Z}}

\renewcommand{\b}{\mathbf}
\renewcommand{\c}{\mathcal}
\renewcommand{\epsilon}{\varepsilon}

\renewcommand{\leq}{\leqslant}
\renewcommand{\geq}{\geqslant}
\renewcommand{\#}{\sharp}

\DeclareMathOperator*{\Osum}{\sum{}^*}

\title
[The Bateman--Horn conjecture on average for generalized von Mangoldt Functions]
{The Bateman--Horn conjecture on average for generalized von Mangoldt functions}

\author{E. Sofos} 
\address{Dipartimento di Matematica\\
Universit{\`a} di Roma
\\Tor Vergata\\
 Via del  la Ricerca Scientifica
 \\ 00133, Rome, Italy}
\email{efthymios.sofos@uniroma2.eu}

\subjclass[2020]{
11C08,  
11N05, 
11N37. 
} 
\date{}

\begin{document} 
\begin{abstract} We study 
the 
Bateman--Horn conjecture for generalised von Mangoldt functions. 
As an application, we prove that for $k \in \{2, 3\}$, almost all Bouniakowsky polynomials 
represent integers that are a product of exactly $k$ primes.
\end{abstract}

\maketitle

\setcounter{tocdepth}{1}

\tableofcontents

\section{Introduction}   \label{s:intro}  
Let $k$ be a positive integer.
A positive integer is called 
$E_k$ when it has  
exactly $k$ distinct prime divisors.
A result of Chen \cite{Chen} states that 
the polynomial $x(x+2)$ represents infinitely many
integers that are $E_2$ or $E_3$. Due to the parity problem
it is not known which of the two cases occurs infinitely.
Iwaniec \cite{MR485740} later proved 
that $x^2+1$ takes on infinitely many values that are a 
prime or $E_2$; 
again it is not known which case occurs infinitely often.  
 The only  
irreducible integer polynomials 
that are known to represent 
infinitely many $E_k$  are linear.

A polynomial $P\in \mathbb{Z}[t]$ is  called \textit{Bouniakowsky} when  it is irreducible,
its leading coefficient is positive and for every prime $\ell$ there is $m\in \mathbb Z$ such that 
$\ell\nmid P(m)$. Polynomials that represent infinitely  many primes  necessarily satisfy all these conditions
and Bouniakowski  conjectured  that these are sufficient~\cite[page 328]{boun}. 
The analogous equivalent condition   for representing infinitely many $E_2$ numbers is not obvious. For example,
$P(t)=2t$ is not Bouniakowsky but still represents infinitely many such integers. However the density  
of $E_2$ among the values of $P(t)=2t$ is low and it is therefore more relevant to ask for a generalisation of the Bateman--Horn 
conjecture \cite{MR148632}. The conjecture states that for irreducible $P\in \Z[t]$ one has $$
\sum_{\substack{1\leq m\leq x\\P(m) \textrm{ prime } } }
\log P(m)
\sim \mathfrak S_{ P}x, $$ where   $ \mathfrak S_{P}$ is the singular series defined by 
$$  \mathfrak S_{P}:=\prod_{\substack{ \ell \ \mathrm{ prime}\\ \ell=2 }}^\infty 
\frac{1-\ell^{-1} \#\{s\in \mathbb F_\ell:P(s)=0\}}{1-\ell^{-1}}.$$ It seems that the right $E_2$-generalisation is as follows:  
an $E_2$ integer $m$ has the form $m=p_1^{\alpha} p_2^{\beta}$ for primes $p_1 \neq p_2$ and positive integers $\alpha,\beta$. 
We then define   $\mathcal L_2(m)= (\log p_1)(\log p_2)$.  For an irreducible $P\in \mathbb Z[t]$ we ought to have 
$$ \sum_{\substack{1\leq m\leq x\\P(m) \textrm{ is } E_2 } }
\mathcal L_2(P(m))
\sim \frac{\mathfrak S_{ P}}{2} x (\log |P|x), $$ where $
|P|$ is the  maximum of absolute values of the 
coefficients of $P$.
Our first result proves this  for $100\%$ of polynomials ordered by height. We use the notation
$P>0$ to denote    that the leading coefficient of $P$ is strictly positive. We denote the version of $ \mathfrak S_{P}$ in which one takes the product over all primes $\ell \leq \log x $ by $ \mathfrak S_{P}(x)$.
\begin{theorem}
\label{thm:bounia2}  Fix any $d\in \mathbb N$ and $\delta>1$. For any $x,H\geq 1$ with 
$ (\log H)(\log \log H)<x \leq (\log H)^\delta$ we have  $$ \sum_{\substack{ P\in \Z[t], \deg(P)=d \\ P>0, |P|\leq H}}
\left|\sum_{\substack{1\leq m\leq x\\P(m) \ \mathrm{ is } \ E_2 } }\mathcal L_2(P(m))
-\frac{\mathfrak S_{ P}(x)}{2} x \log H\right|^2 \ll H^{d+1} \frac{(x\log H)^2}{\log x},$$ where the implied constant depends only on $\delta$ and $d$. In particular, for $100\%$ of degree $d$ integer polynomials $|P|\leq H$ with a positive leading coefficient 
one has   $$\sum_{\substack{1\leq m\leq x\\P(m) \ \mathrm{ is }\  E_2 } }\mathcal L(P(m))
=\frac{\mathfrak S_{ P}(x)}{2} x \log H+O\Bigg(
 \frac{x\log H}{(\log x)^{1/4}} \Bigg)$$ uniformly for  $x\leq (\log H)^\delta$. Furthermore, $100\%$ of degree $d$ Bouniakowsky polynomials $P$ represent more than   $(\log |P|)^\delta$ integers that are $E_2$. \end{theorem}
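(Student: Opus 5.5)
We expand the square and reduce everything to averages over polynomials of local densities, using that for fixed distinct integers $m_1,m_2\leq x$ the pair $(P(m_1),P(m_2))$ becomes essentially equidistributed as $P$ ranges over the box $|P|\leq H$. Write $\Lambda_2:=\mathcal L_2\cdot \mathbbold 1_{E_2}$ and $S(P)=\sum_{m\leq x}\Lambda_2(P(m))$. Expanding $|S(P)-M(P)|^2$ with $M(P)=\tfrac12\mathfrak S_P(x)x\log H$ gives three terms: $\sum_P S(P)^2$, $\sum_P S(P)M(P)$ and $\sum_P M(P)^2$. The target is to show that each of these equals the same main term $\tfrac14 H^{d+1}2^{d+1}x^2(\log H)^2\,\mathbb E_P[\mathfrak S_P(x)^2]$, up to an error $O(H^{d+1}(x\log H)^2/\log x)$. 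Here $\mathbb E_P$ denotes the average over the box.

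\textbf{Single point.} For fixed $m$, the map $P\mapsto P(m)$ is a linear form in the coefficients whose constant coefficient has coefficient $1$. Hence the values $P(m)$ are equidistributed over intervals of length $\asymp H m^d$, with fibres of essentially uniform size. Summing $\Lambda_2$ over the values is therefore a sum of $\Lambda_2$ over an interval. By the prime number theorem, $\sum_{n\leq N}\Lambda_2(n)\sim N\log N$ (the $\tfrac12$ comes from ordering $p_1<p_2$ versus unordered; I would fix conventions so that the density is $\tfrac12\log H$ at height $H$). Since $x\leq(\log H)^\delta$, we have $\log(Hm^d)=\log H+O(\log\log H)$. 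To insert the weight $\mathfrak S_P(x)$ in the cross term, condition $P$ modulo $W=\prod_{\ell\leq\log x}\ell$. This modulus is $\leq x^{1+o(1)}$, hence tiny, so we need $\Lambda_2$ in residue classes modulo $W$ uniformly. This follows from Siegel--Walfisz applied to each prime factor, together with a convolution argument: the larger prime factor is $\geq \sqrt{N}$ and carries the equidistribution.

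\textbf{Pairs.} For $m_1\neq m_2$, the map $P\mapsto(P(m_1),P(m_2))$ is a rank-$2$ lattice map (using the constant and linear coefficients). Its image is the sublattice $\{(a,b):a\equiv b \bmod (m_1-m_2)\}$ intersected with congruence conditions coming from $\gcd$ structure. So $\sum_P\Lambda_2(P(m_1))\Lambda_2(P(m_2))$ reduces to a correlation of $\Lambda_2(a)\Lambda_2(b)$ over $a\equiv b\bmod q$, with $q=|m_1-m_2|\leq x$. This is a sum over residue classes $r\bmod q$ of products of two independent sums of $\Lambda_2$ in arithmetic progressions, because $a$ and $b$ range freely in boxes of size $H$. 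Thus it equals $\sum_{r}A(r)^2$ with $A(r)=\sum_{a\equiv r}\Lambda_2(a)$. Again by Siegel--Walfisz for $q\leq(\log H)^\delta$, $A(r)\approx \frac{\mathbbold 1_{(r,q)=1}}{\phi(q)}\cdot(\text{total})$ up to small-prime corrections. The outcome is a local factor $\prod_{\ell\mid q}(\ldots)$. Summing over $m_1\neq m_2\leq x$ and comparing with $\mathbb E_P[\mathfrak S_P(x)^2]$, which we compute by the Chinese remainder theorem as an average over $P\bmod W$, gives matching main terms. The diagonal $m_1=m_2$ contributes $\ll H^{d+1}x(\log H)^2$, which is acceptable because $x\gg\log x$.

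\textbf{Main obstacle.} The main obstacle is the discrepancy between the true singular series for pairs, which involves primes $\ell$ dividing $m_1-m_2$ of all sizes, and the truncation at $\ell\leq\log x$. The primes $\ell>\log x$ must be shown to contribute an average factor $1+O(1/\log x)$ over pairs. I would handle this by bounding $\sum_{m_1\neq m_2\leq x}\big(\prod_{\ell\mid m_1-m_2,\ \ell>\log x}(1+O(\ell^{-1}))-1\big)\ll x^2/\log x$. This error is what yields the $1/\log x$ saving.

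\textbf{Consequences.} The ``in particular'' statement follows by Chebyshev's inequality applied with threshold $x\log H(\log x)^{-1/4}$, giving an exceptional proportion $\ll(\log x)^{-1/2}$. Uniformity in $x$ is obtained by taking a dyadic or geometric set of $x$ values together with monotonicity. The hypothesis $x>(\log H)(\log\log H)$ is needed only for the error terms and can be avoided for smaller $x$ via a union over scales. For the final claim, Bouniakowsky polynomials have positive density among all polynomials and satisfy $\mathfrak S_P(x)\gg(\log x)^{-d}$, which dominates the error. Hence $S(P)>0$ for $x=(\log H)^{\delta}$, with $\delta$ increased slightly. Since $\Lambda_2(n)\leq(\log n)^2$, the count of $m$ with $P(m)$ in $E_2$ is at least $S(P)/(\log H)^2\gg x(\log x)^{-d}/\log H$. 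This exceeds $(\log H)^{\delta'}$ for any $\delta'<\delta-1$, and since $\delta$ is arbitrary the claim follows.
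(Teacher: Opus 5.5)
Your route is genuinely different from the paper's, and it is viable in this range. The paper never works with $\mathcal L_2\mathds 1_{E_2}$ directly. It writes $\Lambda\ast\Lambda=\Lambda_2-L\Lambda$ (from $\Lambda_{k+1}=L\Lambda_k+\Lambda_k\ast\Lambda$). It then controls $\Lambda_2$ by Theorem~\ref{thm:Lambda_k}: the off-diagonal is reduced via M\"obius randomness and Davenport's bound to the prime pair correlation of \cite{MR4542704}; the cross term uses Knafo's Siegel--Walfisz theorem for $\Lambda_k$, needed only as a lower bound; the diagonal uses $\Lambda_k\leq L^k$ and a Cauchy--Schwarz bootstrap. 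It controls $L\Lambda$ by the prime case plus $\log$-stabilisation, then discards prime powers and uses $\Lambda\ast\Lambda=2\mathcal L_2$ on $E_2$.

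You instead expand the square for $\mathcal L_2\mathds 1_{E_2}$ itself. You use that $(P(m_1),P(m_2))$ runs over the lattice $\{a\equiv b \bmod |m_1-m_2|\}$, together with a Siegel--Walfisz theorem for $E_2$ numbers obtained by convolution from the classical one. For $x\leq(\log H)^\delta$ this works and is more elementary. Since it never passes through $\Lambda^{(a)}L^b$, it is not tied to the obstruction of Lemma~\ref{paparity}. The paper's route instead gives the general $\Lambda_k$ statement for several polynomials at once and reuses existing pair-correlation results.

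Several steps are wrong as written, however.

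\textbf{Diagonal.} $\sum_{n\leq N}(\mathcal L_2\mathds 1_{E_2})(n)^2\asymp N(\log N)^3$; the $n=pq$ with $p\in[N^{1/4},N^{1/3}]$ already give this. So $m_1=m_2$ contributes $\asymp H^{d+1}x(\log H)^3$, not $H^{d+1}x(\log H)^2$. This is admissible only because $x>(\log H)(\log\log H)\gg_\delta(\log H)\log x$. That is exactly what the lower bound on $x$ is for: it is the term $x(\log H)^{-n}$ in the minimum in Theorem~\ref{thm:Lambda_k}. Your claim that this hypothesis can be removed by a union over scales is therefore false. For $x$ of smaller order than $\log H/\log\log H$, a typical $P$ has no $m\leq x$ with $P(m)$ in $E_2$, so neither the variance bound nor the asymptotic holds. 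Likewise, a dyadic union bound gives no simultaneous uniformity in $x$: there are $\asymp\log\log H$ scales, each with exceptional proportion $(\log x)^{-1/2}$. Chebyshev at each fixed $x$, as in the paper, is what the argument gives.

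\textbf{Pairs.} $a$ and $b$ do not range over a product of boxes. For fixed $c_2,\dots,c_d$ the pair lies in the parallelogram spanned by $2H(1,1)$ and $2H(m_1,m_2)$. Its width is $\asymp Hq/\max(m_1,m_2)$, which can be as small as $H/x$. You need to tile it by squares $I\times J$ of side $H(\log H)^{-C}$ with $C>\delta$. On interior squares use $\sum_{r \bmod q}A_I(r)A_J(r)$, and bound the $\ll x(\log H)^{C}$ boundary squares trivially.

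\textbf{Bouniakowsky claim.} A bound $\mathfrak S_P(x)\gg(\log x)^{-d}$ does not dominate the relative error $(\log x)^{-1/4}$, so your last step does not close. The fix: $\mathfrak S_P(x)$ involves only primes $\ell\leq\log x$, and a Bouniakowsky $P$ has at most $\min\{d,\ell-1\}$ roots modulo $\ell$. Mertens then gives $\mathfrak S_P(x)\gg_d(\log\log x)^{-d}$, which does suffice. The paper quotes the $100\%$ bound $(\log\log x)^{1-d}$.

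\textbf{Normalisation.} $\sum_{n\leq N}(\mathcal L_2\mathds 1_{E_2})(n)\sim\frac12N\log N$ outright, because $\mathcal L_2$ is defined via the unordered pair of primes. There is no convention to fix.
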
 
This generalises the analogous statement 
for primes in \cite[Theorem 1.9]
{MR4542704}. The proofs depend on expressing $\mathcal L_2$ through a linear combination of 
  generalised von Mangoldt functions, which are defined by   
$$\Lambda_k(m):= 
\sum_{\substack{ c\in \mathbb N \\ c\mid m }} \mu(c) \Big(\log \frac{m}{c} \Big)^k, \ \ \ k,m \in \mathbb N.$$ 
These were initially investigated by Selberg for $k=2$ in his proof of the Prime Number Theorem, and subsequently studied for
$k \geq 3$ by Bombieri \cite{MR491570, MR396435} and Friedlander–Iwaniec \cite{MR519891,MR1399351}, among others. 
The generalisation of 
Bateman--Horn's conjecture to $\Lambda_k$
seems to be 
$$  
\sum_{\substack{1\leq m\leq x\\P_i(m)>0 \ \forall i} } \Lambda_{k_1}(P_1(m))\cdots \Lambda_{k_n}(P_n(m)) \sim 
\mathfrak S_{\b P} x 
\prod_{i=1}^n  k_i 
(\log |P_i|x)^{k_i-1}  
,$$  where $P_i$ are pairwise coprime
irreducible integer polynomials and 
$$ \mathfrak S_{\b P} =
\prod_{\substack{ \ell \ \mathrm{ prime}\\ \ell=2 }}^\infty 
\frac{1-\ell^{-1} \#\{s\in \mathbb F_\ell:P_1(s)\cdots P_n(s)=0\}}
{(1-\ell^{-1})^n}.$$
 We prove this in $100\%$ of the cases. Denote by $ \mathfrak S_{\b P}(x)$ the truncated version of $ \mathfrak S_{\b P}$ in which the product is over primes $\ell \leq \log x $.
Fix $n,d_1,\ldots, d_n \in \mathbb N$ and let  $$ \textrm{Poly}(H):=\{\b P\in (\Z[t])^n: 
P_i>0,\deg(P_i)=d_i, |P_i|\leq H
\ \textrm{ for } i=1,\ldots, n\}.$$ 
\newline
The following is the main 
result in this paper:
\begin{theorem}  \label{thm:Lambda_k}   Fix any $n\in \mathbb N, \b d, \b k \in \mathbb N^n,\delta>n$ and let 
$d=\sum_{i=1}^n d_i, k=\sum_{i=1}^n k_i$.  Then for any $x,H\geq 1$ with  $ (\log H)^n<x \leq (\log H)^\delta$ we have 
$$ \sum_{  \mathbf P\in  \mathrm{Poly}(H)}  \left| \sum_{\substack{1\leq m\leq x\\P_i(m)>0 \ \forall i} } 
\hspace{-0.3cm}\Lambda_{k_1}(P_1(m))\cdots \Lambda_{k_n}(P_n(m))  - \mathfrak S_{\b P}(x) x (\log H)^{k-n}
\prod_{i=1}^n  k_i  \right|^2  \! \!  \!  \ll  \! \!   \frac{H^{d+n} (x (\log H)^{k-n})^2}
 {\min\{ \log x,x(\log H)^{-n}\}},$$ where the implied constant depends only on $\delta,\b d,\b k$ and $n$.    \end{theorem}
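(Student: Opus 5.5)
The plan is a second-moment (variance) computation over the polynomial coefficients, following the approach of \cite[Theorem 1.9]{MR4542704}. The first step is to replace each $\Lambda_{k_i}$ by a truncated divisor sum. Write
$$\Lambda_{k_i}(N)=\sum_{c\mid N}\mu(c)\Big(\log \frac{N}{c}\Big)^{k_i}.$$
For $N=P_i(m)$ with $m\leq x$ and $|P_i|\leq H$ we have $\log N=\log H+O(\log x)$. The term $(\log N/c)^{k_i}$ is then expanded binomially in $\log N$ and $\log c$. The aim is to show that, on average over $\mathrm{Poly}(H)$, only small moduli $c\leq z$ contribute, with $z$ a small power of $x$ (say $z=x^{\eta}$). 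The key input is that for fixed $m_1,m_2\leq x$, the vector $(P_i(m))$ as $\b P$ ranges over a box of side $H$ is equidistributed modulo any $q\ll H^{1/2}$. This holds because the coefficient map $P\mapsto P(m)$ is linear with a coefficient equal to $1$. Sums of $\mu(c)$ over $c\mid P(m)$ with $c$ large can therefore be controlled in mean square by counting lattice points. Concretely, I would expand the square, so that the main quantity becomes
$$\sum_{\b P}\sum_{m_1,m_2\leq x}\prod_i\Lambda_{k_i}(P_i(m_1))\Lambda_{k_i}(P_i(m_2)).$$
This is handled separately on the diagonal $m_1=m_2$ and the off-diagonal $m_1\neq m_2$.

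\emph{Off-diagonal terms.} For $m_1\neq m_2$, the pair $(P(m_1),P(m_2))$ runs over essentially all pairs of integers of size $\asymp H x^{d}$ as $P$ varies, with local densities given by a Vandermonde-type lattice. We evaluate the average of $\Lambda_{k}(a)\Lambda_{k}(b)$ over $a,b$ in such a lattice in intervals of length $\asymp H$. The necessary input is that $\sum_{a\leq y, a\equiv r \ (q)}\Lambda_k(a)$ has main term $k\,y(\log y)^{k-1}/\phi(q)$ on average over moduli $q$ that are small compared with $H$. Since $q$ here only needs to be a product of primes $\ell\leq$ a power of $x\leq(\log H)^\delta$, Siegel--Walfisz type results for $\Lambda_k$ suffice; these follow from those for $\Lambda$ via the convolution identity $\Lambda_{k+1}=\Lambda_k\log+\Lambda\ast\Lambda_k$. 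Summing the resulting local densities over residues of $m$ modulo primes $\ell$ produces $\mathfrak S_{\b P}$. Its square, averaged over $\b P$, matches the cross term with $\mathfrak S_{\b P}(x)^2$. Truncating the singular series at $\ell\leq\log x$ costs a factor $1+O(1/\log x)$ in mean square, via the standard estimate for the variance of $\prod_{\ell>\log x}$ over random polynomials. This is the source of the $\log x$ in the denominator.

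\emph{Diagonal terms.} The $m_1=m_2$ contribution is $x$ times the average of $\prod_i\Lambda_{k_i}(P_i(m))^2$. Using $\Lambda_k(N)\ll(\log N)^k$ and the prime-like sparsity of its support (supported on integers with at most $k$ prime factors), this is
$$\ll H^{d+n}\,x\,(\log H)^{2k-n}.$$
Relative to the main term squared, $H^{d+n}x^2(\log H)^{2k-2n}$, this is a saving of exactly $x(\log H)^{-n}$, which explains the second entry of the minimum and the hypothesis $x>(\log H)^n$.

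\emph{Main obstacle.} The hardest step is the uniformity of the off-diagonal evaluation. The average of $\Lambda_{k_1}(P_1(m_1))\cdots$ over $\b P$ must be computed with error saving a factor $\log x$, uniformly for $m_1,m_2\leq(\log H)^\delta$, while the $n$ polynomials at two points give $2n$ linear forms in the coefficients. I would first treat it by fixing all coefficients but the constant terms, reducing to correlations of $\Lambda_{k_i}$ along $2n$ shifted progressions in the constant terms. These decouple because the constant terms of different $P_i$ are independent variables, and for a single $P_i$ the two values differ by a fixed multiple of $m_1-m_2$. What remains is therefore a binary shifted-convolution problem for $\Lambda_{k_i}$ with a shift that is itself averaged over the remaining coefficients. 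This extra averaging over shifts should make it accessible via Barban--Davenport--Halberstam-type large sieve bounds, and that is the estimate I would try first.
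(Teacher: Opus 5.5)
\paragraph{Verdict: the architecture is right, but the central estimate is missing.}
Your skeleton matches the paper's. You expand the square into $V_1-2cV_2+c^2V_3$ with $c=k_1\cdots k_n x(\log H)^{k-n}$; the diagonal $m_1=m_2$ gives the saving $x(\log H)^{-n}$; and truncating $\prod_\ell\gamma_n(\ell)$ at $\ell\le\log x$ gives the $\log x$. The gap is the estimate that carries the theorem: the off-diagonal asymptotic, uniformly for $m_1\neq m_2\le x$,
\[\sum_{|P|\le H}\Lambda_k(P(m_1))\Lambda_k(P(m_2))=2^dH^{d+1}k^2(\log H)^{2k-2}\frac{|m_1-m_2|}{\phi(|m_1-m_2|)}\big(1+o(1/\log x)\big),\]
which is Proposition \ref{prop:casanatense}. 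You list this as the main obstacle and only name a tool to try.

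\paragraph{Why your first step cannot supply it.}
Divisors $c\mid P(m)$ run up to $Hx^d$, far beyond the moduli for which $P(m)$ is equidistributed. So the tail $\sum_{c\mid N,\,c>z}\mu(c)(\log c)^j$ cannot be controlled by counting lattice points; it needs cancellation in $\mu$. The paper gets this from Davenport's bound (Lemma \ref{lem:davenport}), whose saving is a power of $\log z$. That beats the required $(\log H)^{-A}$ precision only when $z\ge\exp((\log H)^\eta)$. With your $z=x^\eta$ one has $\log z\asymp\log\log H$, and $H\log H/(\log z)^{A}$ is not even $o(H)$. With the larger $z$, the paper proceeds as follows:
\begin{itemize}
\item it writes $\Lambda_k=\sum_j\binom kj(-1)^jL^{k-j}(1\ast\mu L^j)$;
\item it replaces $\log P(m_i)$ by $\log H$;
\item it shows via the truncation that all pairs $(j_1,j_2)\neq(1,1)$ are lower order;
\item it imports the $(1,1)$ term from \cite{MR4542704}.
\end{itemize}

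\paragraph{How your decoupling idea can be finished.}
Your idea can be completed without anything of Barban--Davenport--Halberstam type, because once you also average over $c_1$ no shifted convolution remains. Fix $c_2,\dots,c_d$ (recall $d_i\ge1$). Since $P(m_1)-P(m_2)\equiv0\pmod{m_1-m_2}$, the map $(c_0,c_1)\mapsto(P(m_1),P(m_2))$ is an affine bijection of $\mathbb Z^2$ onto $\{(a,b): a\equiv b \bmod |m_1-m_2|\}$. It sends the coefficient box to a parallelogram.
\begin{itemize}
\item Tile the parallelogram by squares $I\times J$ of side $H(\log H)^{-B}$. On each square the sum is $\sum_{r}\big(\sum_{a\in I,\,a\equiv r}\Lambda_k(a)\big)\big(\sum_{b\in J,\,b\equiv r}\Lambda_k(b)\big)$.
\item Lemma \ref{lemma:knafo}, used as in Corollary \ref{cowboys from hell:pantera}, evaluates the reduced classes with modulus $|m_1-m_2|\le(\log H)^\delta$.
\item Boundary squares, the strips near $a=0$ or $b=0$, and non-reduced classes contribute lower order.
\end{itemize}
This gives the asymptotic above with relative error $O((\log H)^{-3/4})$, uniformly in $m_1,m_2$. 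That is more than enough, and it is a genuinely more elementary route than the paper's.

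\paragraph{Two smaller points.}
First, your diagonal bound is correct, but sparsity of the support is not the reason: integers with at most $k$ prime factors have density about $(\log\log N)^{k-1}/\log N$. What works is $\Lambda_k^2\le L^k\Lambda_k$, the independence of the $P_i$, and $\sum_{P}\Lambda_{k_i}(P(m))\ll H^{d_i+1}(\log H)^{k_i-1}$. The paper instead uses a Cauchy--Schwarz bootstrap.

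Second, you never treat the cross term $V_2$. Only a lower bound is needed, because of the minus sign. By positivity you may then discard the classes with $\gcd(P_i(m),W)>1$, where Siegel--Walfisz does not apply; this is what the paper does.
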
 
Thus, for almost all $P_1,\ldots, P_n$ one has $$ \sum_{\substack{1\leq m\leq x\\P_i(m)>0 \ \forall i} } 
\hspace{-0.3cm}\Lambda_{k_1}(P_1(m))\cdots \Lambda_{k_n}(P_n(m))  \sim (k_1\cdots k_n)  \mathfrak S_{\b P}(x) x (\log H)^{k-n}.$$
as soon as $x/(\log H)^n\to \infty$.

Taking suitable combinations of $\Lambda_k$ 
allows us to also handle  $E_3$ numbers. For a strictly positive  $E_3$ integer $m$
let  $\mathcal L_3(m)$  be the product of the three distinct prime divisors of $m$. \begin{theorem}
\label{thm:bounia33}  Fix  $d\in \mathbb N$ and $\delta>1$. For any $x,H\geq 1$ with 
$ (\log H)(\log \log H)<x \leq (\log H)^\delta$ we have  $$ \sum_{\substack{ P\in \Z[t], \deg(P)=d \\ P>0, |P|\leq H}}
\left|\sum_{\substack{1\leq m\leq x\\P(m) \ \mathrm{ is } \  E_3 } }\mathcal L_3(P(m))
-\frac{\mathfrak S_{ P}}{6} x (\log H)^2\right|^2 \ll  H^{d+1} \frac{(x\log H)^4}{\log x},$$ where the
implied constant depends only on $\delta$ and $d$. In particular, for $100\%$ of degree $d$ integer polynomials $|P|\leq H$ with
a positive leading coefficient  one has   $$\sum_{\substack{1\leq m\leq x\\P(m) \ \mathrm{ is } \  E_3 } }\mathcal L_3(P(m))
=\frac{\mathfrak S_{ P}}{6} x (\log H)^2+O\Bigg( \frac{x(\log H)^2}{(\log x)^{1/4}} \Bigg)$$ uniformly for  $x\leq (\log H)^\delta$. Furthermore, $100\%$ of degree $d$ Bouniakowsky polynomials represent more than  $(\log |P|)^\delta$ integers that are $E_3$.
\end{theorem}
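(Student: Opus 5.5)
We will deduce Theorem~\ref{thm:bounia33} from Theorem~\ref{thm:Lambda_k} with $n=1$, in the same way Theorem~\ref{thm:bounia2} is obtained for $E_2$. We read $\mathcal L_3(m)$ as $(\log p_1)(\log p_2)(\log p_3)$ for the three distinct primes $p_i\mid m$, which is the normalisation that produces the main term $\frac{\mathfrak S_P}{6}x(\log H)^2$.

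\textbf{Step 1: a combinatorial identity.} From $\Lambda_{k+1}=\Lambda_k\log+\Lambda_1*\Lambda_k$ one sees that $\Lambda_3$ is supported on integers with at most three distinct prime factors. On $E_3$ integers $m=p_1^{a}p_2^{b}p_3^{c}$ it equals $6\prod_i\log p_i$ plus a contribution that vanishes when $a=b=c=1$. Write
\[
\Lambda_3=6\mathcal L_3\mathbbm 1_{E_3}+R,
\]
where $R$ is supported on $E_1\cup E_2$ and on non-squarefree $E_3$ integers. On $E_1$ and $E_2$ integers, $R$ is a polynomial in the $\log p_i$ of degree at most $3$. We then express $R$ on $E_2$ as a combination $c_1\Lambda_2\log+c_2\Lambda_1\log^2+c_3\Lambda_2$-type terms, using $\mathcal L_2$ and the analogous identity for $\Lambda_2$, for which $\Lambda_2=2\mathcal L_2\mathbbm1_{E_2}+(\text{prime powers})$.

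\textbf{Step 2: handling the residual terms.} In each residual term we replace $\log P(m)$ by $d\log H+O(\log x)$ for almost all $P$, since $|P(m)|\leq (d+1)Hx^d$ and $|P(m)|\geq H^{1-o(1)}$ for all but a negligible set of $P$. We then apply Theorem~\ref{thm:bounia2} and Theorem~\ref{thm:Lambda_k} with $k=1,2$, each multiplied by the appropriate power of $\log H$. Combining the main terms $3x(\log H)^2$ (from $\Lambda_3$) with the corresponding $E_1,E_2$ main terms and solving for $\mathcal L_3$ should leave exactly $\frac{\mathfrak S_P(x)}{6}x(\log H)^2$ after the coefficients cancel. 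This is consistent with heuristics: the density of $E_3$ numbers weighted by $\mathcal L_3$ is $(\log N)^2/2$, which divided by $3$ gives $1/6$. The non-squarefree values are bounded trivially on average over $P$: the expected number of $m\leq x$ with $p^2\mid P(m)$ for some $p>\log x$ is small, and small primes are controlled by the local densities.

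\textbf{Step 3: passing to the full singular series.} We replace $\mathfrak S_P(x)$ by $\mathfrak S_P$ using the second-moment bound $\sum_P|\mathfrak S_P-\mathfrak S_P(x)|^2\ll H^{d+1}/\log x$, obtained by averaging over $P$ modulo $\ell$. Cauchy--Schwarz then combines the mean-square errors, giving the bound $H^{d+1}(x\log H)^4/\log x$. The range condition $x>(\log H)\log\log H$ makes $\min\{\log x,x/\log H\}\gg\log\log H\asymp\log x$.

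\textbf{Step 4: the pointwise and Bouniakowsky consequences.} The ``$100\%$'' asymptotic follows from Chebyshev's inequality, with exceptional set of density $(\log x)^{-1/2}$. Uniformity in $x$ follows by dyadic decomposition. The Bouniakowsky statement follows because $\mathfrak S_P>0$ for Bouniakowsky $P$ and is $\gg(\log\log H)^{-d}$ outside a small set. Each $E_3$ value contributes $\mathcal L_3\leq (d\log H)^3$, so the number of $m$ with $P(m)$ an $E_3$ integer is $\gg x/(\log H\,(\log\log)^{O(1)})$. Taking $x=(\log H)^{\delta'}$ with $\delta'>\delta+1$, this exceeds $(\log|P|)^\delta$.

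The main obstacle is Step 1: getting exact identities, including the prime-power and non-squarefree contributions, so that the main-term constants cancel to give $1/6$. The decomposition is delicate, so I would first verify it on squarefree inputs and then bound the rest crudely.
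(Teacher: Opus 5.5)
Your plan has the same architecture as the paper's proof: isolate $6\mathcal L_3$ on $E_3$ inside $\Lambda_3$, treat the other pieces with Theorem~\ref{thm:Lambda_k} for $n=1$ plus log-stabilisation, and discard what lives on $\omega\le 2$. The gap is Step~1, which is essentially the whole proof: you leave it as a hope, and the hope is false. The identity you need comes from applying $\Lambda_{k+1}=L\Lambda_k+\Lambda*\Lambda_k$ twice together with the Leibniz rule $L(f*g)=(Lf)*g+f*(Lg)$. This gives $(L\Lambda)*\Lambda=\tfrac12 L(\Lambda*\Lambda)$ and hence
\[
\Lambda*\Lambda*\Lambda=\Lambda_3-L^2\Lambda-\tfrac32 L(\Lambda*\Lambda)=\Lambda_3-\tfrac32 L\Lambda_2+\tfrac12 L^2\Lambda .
\]
Since $\Lambda$ and $\Lambda*\Lambda$ vanish on $E_3$, this shows $\Lambda_3=\Lambda*\Lambda*\Lambda=6\mathcal L_3$ on \emph{every} $E_3$ integer, so your $R$ has no non-squarefree $E_3$ part. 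What must be discarded is $\Lambda*\Lambda*\Lambda$ on $\omega\le 2$, e.g.\ $(\Lambda*\Lambda*\Lambda)(p^2q)=3(\log p)^2\log q$. In units of $\mathfrak S_P(x)x(\log H)^2$, the main terms of $\Lambda_3$, $L\Lambda_2$, $L^2\Lambda$ are $3,2,1$. This gives $3-\tfrac32\cdot 2+\tfrac12=\tfrac12$ for $\Lambda*\Lambda*\Lambda$, hence $\tfrac1{12}\mathfrak S_P(x)x(\log H)^2$ for $\mathcal L_3$, not $\tfrac16$. Your own route agrees: on squarefree $E_2$ integers $R=3L\mathcal L_2$, and Theorem~\ref{thm:bounia2} contributes $3\cdot\tfrac12$, so again $3-1-\tfrac32=\tfrac12$. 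Your heuristic fails at exactly this point. The density $(\log N)^2/2$ belongs to $\Lambda*\Lambda*\Lambda$ (from $(-\zeta'/\zeta)^3\sim(s-1)^{-3}$), and on $E_3$ this function equals $3!\,\mathcal L_3$, so one divides by $6$, not $3$. The paper's own computation produces the same $\tfrac12$ and the same factor $6$, so the $\tfrac16$ in the displayed statement appears to be a misprint. Since the decomposition is exact, no choice of coefficients in Step~1 can make the constants ``cancel to give $1/6$''.

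The bookkeeping around this also has errors.
\begin{itemize}
\item For $|P|\le H$ and $m\le x$, one has $\log P(m)=\log H+O(\log x)$ outside a negligible set of $P$, not $d\log H+O(\log x)$. Your own remark $|P(m)|\ge H^{1-o(1)}$ says this. With $d\log H$, the $L\Lambda_2$ and $L^2\Lambda$ pieces would pick up spurious factors $d$ and $d^2$ relative to the $(\log H)^{k-1}$ of Theorem~\ref{thm:Lambda_k}, and the main terms would not combine.
\item In Step~3, the paper never passes from $\mathfrak S_P(x)$ to $\mathfrak S_P$; its argument, like that of Theorem~\ref{thm:bounia2}, delivers the truncated series. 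Your bound $\sum_P|\mathfrak S_P-\mathfrak S_P(x)|^2\ll H^{d+1}/\log x$ is not justified by averaging $P$ modulo $\ell$. The tail runs over all $\ell>\log x$, including $\ell>H$ where $P \bmod \ell$ is not equidistributed in the box, and for an individual $P$ the tail converges only conditionally.
\item The error term $H^{d+1}(x\log H)^4/\log x$ that you carry over is too weak for the Chebyshev step. What Theorem~\ref{thm:Lambda_k} gives here, and what your exceptional density $(\log x)^{-1/2}$ in Step~4 requires, is $H^{d+1}(x(\log H)^2)^2/\log x$.
\end{itemize}
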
 

The analogous function  $\mathcal L_4$ for $E_4$ numbers cannot be written as a combination of the $\Lambda_k$ functions, see Lemma \ref{paparity} for an exact statement.
Crucially, $\Lambda_k$ is supported on numbers of type $E_j$ ($j \leq k$), but its average over  $E_j$ for any single $j\leq k$
remains a positive proportion of the overall average.

\begin{notation} Throughout this paper, $\phi$ denotes the Euler totient function, $\mu$ denotes the Möbius function, and $\ast$ represents the Dirichlet convolution. Additionally, we define the shorthand notation $L(m) := \log m$. We let $\omega(m)$ be the number of distinct prime numbers dividing $m$.
\end{notation}

\section{Generalized von Mangoldt functions} \subsection{M\"obius randomness}
\label{ss:mobiusdisjoint} We fix $d,k,m_1\neq m_2\in \mathbb N$
and  study \begin{equation} \label{def:GmGm} \mathcal G_{(m_1,m_2)}(H;d,k):=
\sum_{|P|\leq H} \Lambda_k(P( m_1)) \Lambda_k(P(m_2)),\end{equation} where the sum is over $P\in \mathbb Z[t]$
of degree $d$ with $P>0$ and $P(m_1),P(m_2)>0$. \begin{proposition}\label{prop:casanatense}
Fix $\delta,\epsilon>0$ and $d,k\in \mathbb N$. For all integers  $1\leq m_1\neq m_2 \leq (\log H)^\delta$ we have 
$$\mathcal G_{\b m}(H;d,k)=2^d H^{d+1}  (k (\log H)^{k-1})^2 \frac{|m_1-m_2|}{\phi(|m_1-m_2|)}  
\left(1+ O (    (\log H )^{- 1+ \epsilon} )\right),$$ where the implied constant is independent of $m_i$ and $H$. 
\end{proposition}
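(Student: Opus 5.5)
Our plan is to reduce the sum over polynomials to a sum over pairs of values, and then to evaluate that sum with the convolution structure of $\Lambda_k$.

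\textbf{Step 1: reduction to pairs of values.} Write $P(t)=\sum_{j=0}^d a_j t^j$. The map $P\mapsto (P(m_1),P(m_2))$ is linear in $(a_0,a_1)$, with determinant $m_2-m_1\neq 0$ after the other coefficients $a_2,\dots,a_d$ are fixed. Fix $a_2,\dots,a_d$ (with $a_d>0$ when $d\geq 2$). Then $(P(m_1),P(m_2))=(u,v)$ runs over the lattice points with $u\equiv v \md{|m_1-m_2|}$, up to a shift determined by the fixed coefficients. These points lie in a region whose image under the inverse map is the box $|a_0|,|a_1|\leq H$. Since $m_i\leq (\log H)^\delta$, the pair $(u,v)$ ranges over a parallelogram of size roughly $H(\log H)^{d\delta}$, but after the linear change of variables its area equals $(2H)^2|m_1-m_2|$. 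Summing over the $\asymp (2H)^{d-1}$ choices of the remaining coefficients, and accounting for the sign condition on the leading coefficient, gives the factor $2^dH^{d+1}$.

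The whole problem therefore becomes the evaluation of
\[
\sum_{\substack{(u,v)\in \mathcal R\\ u\equiv v+c \ (\mathrm{mod}\ q)}} \Lambda_k(u)\Lambda_k(v),\qquad q=|m_1-m_2|,
\]
where $\mathcal R$ is a parallelogram.

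\textbf{Step 2: expanding the convolution.} We expand $\Lambda_k=\mu\ast L^k$ in both variables and write $u=c_1 r$, $v=c_2 s$. This gives a sum over $c_1,c_2$ of $\mu(c_1)\mu(c_2)$ times a count of lattice points $(r,s)$ weighted by $(\log r)^k(\log s)^k$ inside a dilated region, subject to a congruence modulo $q$. The congruence is solvable only when $\gcd(c_1,q)=\gcd(c_2,q)$ is compatible with the shift $c$; by primitivity of the relevant residue we expect $\gcd(c_i,q)=1$ to dominate. The lattice count equals area$/(c_1c_2q)$ plus a boundary error of size $O(H(\log H)^{O(1)}/\min(c_1,c_2))$.

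\textbf{Step 3: the $c_i$-sums.} The key point is that $\sum_{c\leq y}\mu(c)(\log(X/c))^k/c$ is evaluated via the Mertens-type asymptotic $\sum_{c\leq y,\,(c,q)=1}\mu(c)\log^j(y/c)/c$. The leading contribution is $k\,(\log X)^{k-1}\cdot q/\phi(q)$. The factor $q/\phi(q)$ arises from the coprimality condition, through the local factors $(1-1/\ell)^{-1}$ for $\ell\mid q$. These local factors appear once in each of the two variables, but they merge, because the joint congruence removes one copy; this merging should produce exactly the single factor $q/\phi(q)$.

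\textbf{Main obstacle.} The hardest step is controlling the large $c_i$, up to $H(\log H)^{O(1)}$, where the boundary error is too large to sum trivially. Following the standard treatment of $\Lambda_k$, we split $\Lambda_k$ into a part with $c\leq D=H^{1-\eta}$ and a remainder. On the remainder we swap to the complementary divisor $u/c\leq H^{\eta}$ and use $\mu\ast L^k$ identities, or bound it by Cauchy--Schwarz together with the equidistribution of $\Lambda_k(u)$ in progressions modulo $q\leq(\log H)^\delta$. That equidistribution follows from Siegel--Walfisz for primes and their combinations. The loss from this treatment is $(\log H)^{-1+\epsilon}$, which matches the claimed error term. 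The region $\mathcal R$ is not a product box, so we would decompose it into thin strips in $u$ and apply the one-dimensional progression asymptotics for $\sum \Lambda_k(v)$ over each strip. This strip decomposition is what gives the $(\log H)^{-1+\epsilon}$ saving.
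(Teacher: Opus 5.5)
\textbf{Verdict: genuine gap.} Your Step 1 is fine, but the core of the argument, handling the large divisors, is not carried out. The route you mention only at the very end is the one that could be completed.

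\textbf{The large-divisor range.} The gap sits in your ``main obstacle'' paragraph, which is where the whole difficulty lies and where no argument is given. After expanding $\Lambda_k=\mu\ast L^k$ in both variables, lattice-point counting only handles $c_1,c_2\leq D$ with $H/D\geq(\log H)^{C}$. For larger divisors you list remedies but carry none out, and they do not work as stated.
\begin{enumerate}
\item Swapping to the complementary divisor leaves $\mu$ on a long variable that is tied to the other factor $\Lambda_k(v)$ through the parallelogram and the congruence. You therefore need M\"obius cancellation that is uniform in this coupling. That is exactly the input the paper uses and your sketch lacks: Davenport's bound $\sup_{\alpha}|\sum_{m\leq y}\mathcal E_{z,j}(m)\mathrm e^{im\alpha}|\ll y\log y/(\log z)^{A}$ (Lemma~\ref{lem:davenport}). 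Fed into \cite[Lemmas 3.4--3.5]{MR4542704}, it removes all divisors $c>z$ even for $z=\exp((\log H)^{\eta})$ (Lemma~\ref{lemma:eurovision}).
\item Cauchy--Schwarz is no substitute. The tail $\sum_{c\mid u,\,c>D}\mu(c)\log^k(u/c)$ equals $-\log^k p_1$ at $u=p_1p_2$ with $p_1<u/D$. For $k=1$, which the proposition includes, these $u$ alone make the Cauchy--Schwarz bound exceed the main term for every $D$ your lattice count allows.
\end{enumerate}

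\textbf{The main-term computation.} Step 3 is only heuristic. For squarefree $c_i$, the density of $\{c_1\mid P(m_1),\ c_2\mid P(m_2)\}$ is $\gcd(c_1,c_2,q)/(c_1c_2)$. So the truncated double sum over $c_1,c_2$ does not split into two one-variable Mertens sums, and ``the local factors merge'' is asserted rather than computed. There is also no shift: $P(m_1)\equiv P(m_2)\bmod (m_1-m_2)$ for every $P$, so the residue is $0$, not primitive. The factor $q/\phi(q)$ comes from $u\equiv v\bmod q$ forcing $\gcd(u,q)=\gcd(v,q)$, so each prime $\ell\mid q$ is charged $1-1/\ell$ once rather than twice.

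\textbf{How the paper proceeds.} The paper never evaluates a $\Lambda_k$-correlation directly.
\begin{enumerate}
\item It writes $\Lambda_k=\sum_{j=1}^{k}\binom{k}{j}(-1)^jL^{k-j}(1\ast\mu L^j)$ on $m>1$ and replaces $\log P(m_i)$ by $\log H$.
\item It combines Lemma~\ref{lemma:eurovision} with the trivial count of Lemma~\ref{lemma:eurovision234}. This shows each $(j_1,j_2)\neq(1,1)$ term is $\ll H^{d+1}(\log H)^{2k-j_1-j_2+\epsilon}$.
\item Since $1\ast\mu L=-\Lambda$, the $(1,1)$ term is the pair correlation of $\Lambda$, already evaluated in \cite[Theorem 3.1]{MR4542704}.
\end{enumerate}

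\textbf{A route that could work.} Your last two sentences point to a genuinely different route, provided it is the whole argument rather than a patch for the divisor tail. Fix $a_2,\dots,a_d$ and $u$. Evaluate $\sum\Lambda_k(v)$ over the progression $v\equiv u\bmod q$ in the $v$-interval by Knafo's Siegel--Walfisz theorem (Lemma~\ref{lemma:knafo}, in the interval form the paper uses for $V_2$). Then sum over $u$ with the prime number theorem for $\Lambda_k$. The factor $q/\phi(q)$ then appears once, from the inner sum, and Steps 2--3 become unnecessary. To complete it you would still need to handle two things:
\begin{enumerate}
\item the edge strips where the $v$-interval is too short for Siegel--Walfisz;
\item the $u$ with $\gcd(u,q)>1$. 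For $k\geq 2$, $\Lambda_k$ does not vanish there (e.g.\ $\Lambda_2(\ell p)=2\log\ell\log p$). Bounding the inner sum by $\Lambda_k\leq L^k$ loses exactly the factor $\log H$ that the rarity of such $u$ gains, so a sieve upper bound for $\Lambda_k$ on non-coprime progressions is required.
\end{enumerate}
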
 The proof will be  given in \S \ref{ss:Come non pensi}. For integers $j_1,j_2\in [0,k]$ 
denote  \begin{align*} &\mathcal G_{(m_1,m_2)}(H;d,k)^{(j_1,j_2)}:=\sum_{  |P|\leq H } \prod_{h=1,2} 
(1\ast \mu L^{j_h})(P(m_h)) (\log P(m_1))^{k-j_h}  \\ \ \textrm{ and } \ 
&\mathcal G^\flat_{(m_1,m_2)}(H;d,k)^{(j_1,j_2)}:= \sum_{  |P|\leq H } \prod_{h=1,2} 
(1\ast \mu L^{j_h})(P(m_h))  ,\end{align*} where the sums are over $P\in \mathbb Z[t]$ of degree $d$ with $P>0$ and $P(m_1),P(m_2)>1$. 
\begin{lemma}[Decomposing $\Lambda_k$]\label{lem:decomposition} We have $$\mathcal G_{\b m}(H;d,k)=\sum_{j_1,j_2=1  }^k
{k\choose j_1 } {k\choose j_2 } (-1)^{j_1+j_2} \mathcal G_{\b m}(H;d,k)^{\b j}, $$\end{lemma}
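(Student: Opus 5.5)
The plan is to prove the identity pointwise for each positive integer $m>1$ and then multiply the resulting expansions for $m=P(m_1)$ and $m=P(m_2)$ and sum over $P$. The first step is the binomial expansion inside the definition of $\Lambda_k$. Writing $\log\frac{m}{c}=\log m-\log c$ gives
$$\Lambda_k(m)=\sum_{c\mid m}\mu(c)\sum_{j=0}^k{k\choose j}(-1)^j(\log c)^j(\log m)^{k-j}=\sum_{j=0}^k{k\choose j}(-1)^j(\log m)^{k-j}\,(1\ast \mu L^{j})(m).$$
Here $(1\ast\mu L^j)(m)=\sum_{c\mid m}\mu(c)(\log c)^j$.

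Next I remove the $j=0$ term. It equals $(\log m)^k\sum_{c\mid m}\mu(c)$, which vanishes for $m>1$ by the basic Möbius identity. Hence, for every integer $m>1$,
$$\Lambda_k(m)=\sum_{j=1}^k{k\choose j}(-1)^j(\log m)^{k-j}(1\ast\mu L^j)(m).$$

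I then handle the boundary values. In the definition of $\mathcal G_{\b m}(H;d,k)$ the sum runs over $P$ with $P(m_1),P(m_2)>0$, whereas the sums $\mathcal G_{\b m}(H;d,k)^{\b j}$ require $P(m_h)>1$. The discrepancy consists of those $P$ with $P(m_1)=1$ or $P(m_2)=1$. For these, $\Lambda_k(1)=\mu(1)(\log 1)^k=0$ because $k\geq 1$, so their contribution to $\mathcal G_{\b m}(H;d,k)$ is zero. Restricting to $P(m_1),P(m_2)>1$ therefore changes nothing.

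Finally, for each $P$ with $P(m_1),P(m_2)>1$, I apply the pointwise expansion to $\Lambda_k(P(m_1))$ and to $\Lambda_k(P(m_2))$, multiply the two finite sums, and interchange the finite sum over $(j_1,j_2)\in\{1,\dots,k\}^2$ with the sum over $P$. This yields exactly the stated combination, with the coefficient ${k\choose j_1}{k\choose j_2}(-1)^{j_1+j_2}$. The weight in the product is $\prod_{h=1,2}(1\ast\mu L^{j_h})(P(m_h))(\log P(m_h))^{k-j_h}$. The definition of $\mathcal G_{\b m}(H;d,k)^{(j_1,j_2)}$ as printed has $(\log P(m_1))^{k-j_h}$, which I read as a typo for $(\log P(m_h))^{k-j_h}$; with that reading the match is exact.

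There is no genuine obstacle here. The only points needing care are the vanishing of the $j=0$ term, which relies on $m>1$, and the harmless exclusion of the values $P(m_h)=1$.
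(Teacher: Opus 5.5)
Your proposal is correct and is essentially the paper's argument: expand $(\log m-\log c)^k$ binomially, discard the $j=0$ term via $\sum_{c\mid m}\mu(c)=0$ for $m>1$, and use $\Lambda_k(1)=0$ to pass from the condition $P(m_h)>0$ to $P(m_h)>1$. Your reading of the printed factor $(\log P(m_1))^{k-j_h}$ as $(\log P(m_h))^{k-j_h}$ is also right, since the exact identity requires it.
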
 \begin{proof} By definition we have
$\Lambda_k(1) =0$. For $m>1$    write  \begin{equation} \label{eq:pergolesi_euridice} \Lambda_k(m)   = \sum_{j=0}^k
{k\choose j } (\log m)^{k-j}(-1)^j (1\ast \mu L^j)(m)\end{equation}  and note that   $(1\ast \mu L^j)(m)=0$ vanishes 
for $j=0$.\end{proof}
We recall the  bound $ 0\leq \Lambda_k(m)\leq L(m)^k$ in \cite[Equation (3.16)]{MR2647984}.
Using this together with \eqref{eq:pergolesi_euridice} and induction  yields \begin{equation}
\label{eq:pergolesi_euridice_dove_sei} (1\ast \mu L^{k})(m) \ll_k L(m)^k 
,\end{equation}with an implied constant independent of $m$. \begin{lemma}[$\log$ stabilisation]
\label{lem:stabilisation} Keep the setting  Proposition \ref{prop:casanatense}. For each $j_1,j_2\in [1,k]$ we have $$
\mathcal G_{\mathbf m}(H;d,k)^{\b j}= (\log H)^{2k-j_1-j_2} \left( 1 +O\left( \frac{\log \log H}{\log H}\right)\right)
\mathcal G^\flat_{\b m}(H;d,k)^{\b j} +O\left(\frac{H^{d+1}}{\log H}\right),$$ where the implied constant is independent of $m_i$ and $H$.
\end{lemma}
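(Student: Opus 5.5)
The idea is to replace each factor $(\log P(m_h))^{k-j_h}$ by $(\log H)^{k-j_h}$ for all but a negligible set of $P$, and to control the exceptional set with the pointwise bound \eqref{eq:pergolesi_euridice_dove_sei}. (In the definition of $\mathcal G^{\b j}$ the factor should read $(\log P(m_h))^{k-j_h}$; I treat it that way.)

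First I locate the typical size of $P(m_h)$. For $|P|\leq H$, degree $d$ and $1\leq m_h\leq(\log H)^\delta$ we have the upper bound
$$P(m_h)\leq (d+1)H m_h^d\leq H(\log H)^{O(1)}.$$
Hence $\log P(m_h)\leq \log H+O(\log\log H)$. For the lower bound, write $P(m_h)=a_0+m_hQ$, where $a_0$ is the constant coefficient. For fixed values of the other coefficients, $a_0$ ranges over an interval of length $2H+1$. So the number of $P$ with $0<P(m_h)\leq H/(\log H)^{A}$ is
$$\ll H^{d}\cdot H(\log H)^{-A}.$$
On the complementary set, $\log P(m_h)=\log H+O(\log\log H)$. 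Consequently
$$(\log P(m_h))^{k-j_h}=(\log H)^{k-j_h}\bigl(1+O(\log\log H/\log H)\bigr),$$
uniformly in $m_h$, with the implied constant depending on $A,k,d,\delta$.

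Next I split the sum defining $\mathcal G^{\b j}_{\b m}$ into the good set $\mathcal A$, where both $P(m_1),P(m_2)>H(\log H)^{-A}$, and the bad set $\mathcal B$.

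On $\mathcal B$, I bound each summand using \eqref{eq:pergolesi_euridice_dove_sei}. Since every $P(m_h)\leq H(\log H)^{O(1)}$, each summand is $\ll (\log H)^{2k}$. As $|\mathcal B|\ll H^{d+1}(\log H)^{-A}$, choosing $A=2k+1$ gives a total contribution $O(H^{d+1}/\log H)$.

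On $\mathcal A$, I insert the asymptotic for the logarithms. This yields
$$(\log H)^{2k-j_1-j_2}\bigl(1+O(\log\log H/\log H)\bigr)\sum_{P\in\mathcal A}\prod_h (1\ast\mu L^{j_h})(P(m_h)).$$
Some care is needed here, because $1\ast \mu L^{j}$ can take both signs. I therefore write the error term as $(\log H)^{2k-j_1-j_2}\cdot O(\log\log H/\log H)\sum_{P\in\mathcal A}|\prod_h(\cdots)|$ and bound it separately, rather than absorbing it multiplicatively.

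This is the main obstacle. Bounding the absolute sum pointwise by $(\log H)^{j_1+j_2}$ would give an error of size $H^{d+1}(\log H)^{2k-1}\log\log H$, which is not of the stated shape. What I would try first is to reduce to a nonnegative quantity by writing $1\ast\mu L^{j}$ as a combination of $L^{i}\Lambda_{j-i}$ via \eqref{eq:pergolesi_euridice} and induction. Then $|\prod_h|$ is bounded by $\ll\prod_h\sum_i L^{i}\Lambda_{j_h-i}$, which is nonnegative. Its average over $P$ can be controlled by the trivial mean value $\sum_{n\leq N}\Lambda_j(n)\ll N(\log N)^{j-1}$, applied in the variable $a_0$ with the other coefficients fixed (and Cauchy–Schwarz to decouple the two points). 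This shows that $\sum_{P}|\prod_h|\ll H^{d+1}(\log H)^{j_1+j_2-2}$. That is also the true size of $\mathcal G^\flat$, so the error is absorbed into the factor $(1+O(\log\log H/\log H))$ in the lemma's statement.

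Finally, I re-extend the sum from $\mathcal A$ to all $P$ to recover $\mathcal G^\flat$. This costs, by the same bad-set bound, $O(H^{d+1}(\log H)^{-A+2k})$, which is again $O(H^{d+1}/\log H)$.
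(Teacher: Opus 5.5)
Your skeleton is exactly the paper's proof: discard the $P$ with $\min_h P(m_h)\leq H(\log H)^{-A}$ by counting the constant coefficient, bound their contribution via \eqref{eq:pergolesi_euridice_dove_sei}, replace $\log P(m_h)$ by $\log H+O(\log\log H)$ on the rest, then re-complete the sum. The paper then simply moves the factor $1+O(\log\log H/\log H)$ outside the sum. That is legitimate only when $\prod_h(1\ast\mu L^{j_h})(P(m_h))$ has constant sign, i.e.\ for $\b j=(1,1)$, where $1\ast\mu L=-\Lambda$. For $j\geq 2$ the sign changes, e.g.\ $(1\ast\mu L^2)(p)=-(\log p)^2$ while $(1\ast\mu L^2)(pq)=2\log p\log q$. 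So the obstacle you flag is real, but your repair fails at two points.

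\textbf{First point.} Cauchy--Schwarz does not yield $\sum_P|\prod_h(\cdots)|\ll H^{d+1}(\log H)^{j_1+j_2-2}$. It brings in second moments, and by \eqref{Lo so ma il mio timore} one has $\sum_{n\leq N}\Lambda_j(n)^2\asymp N(\log N)^{2j-1}$. So you only get $H^{d+1}(\log H)^{j_1+j_2-1}$, the same as bounding one factor pointwise by $(\log H)^{j}$. Your error term then becomes $H^{d+1}(\log H)^{2k-2}\log\log H$, which is not even $o$ of the main term in Proposition \ref{prop:casanatense}. Moreover, for fixed $c_1,\ldots,c_d$ the value $P(m_h)$ runs over an interval of length $\asymp H$ at height up to $H(\log H)^{d\delta}$. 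You therefore need a short-interval bound such as Corollary \ref{cowboys from hell:pantera} with $q=1$, not the initial-segment mean value. To save that logarithm you need a genuine two-point upper bound: for example, vary $(c_0,c_1)$ jointly, use that $(P(m_1),P(m_2))$ then runs over a lattice of index $|m_1-m_2|$, and apply a Brun--Titchmarsh-type bound for $L^i\Lambda_{j-i}$ in progressions modulo $|m_1-m_2|$.

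\textbf{Second point.} Even granting that bound, the absorption step is wrong: $H^{d+1}(\log H)^{j_1+j_2-2}$ is the size of $\sum_P|\prod_h(\cdots)|$, not of $\mathcal G^\flat_{\b m}(H;d,k)^{\b j}$. The Dirichlet series of $1\ast\mu L^j$ is $(-1)^j\zeta(s)(1/\zeta)^{(j)}(s)$, which has at most a simple pole at $s=1$. Hence $1\ast\mu L^j$ has bounded mean value, although $|1\ast\mu L^j|$ has mean of order $(\log N)^{j-1}$. Consistently, Lemmas \ref{lemma:eurovision} and \ref{lemma:eurovision234} give $\mathcal G^\flat_{\b m}(H;d,k)^{\b j}\ll H^{d+1}(\log H)^{\epsilon}$. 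So for $j_1+j_2\geq 3$ an error of size $H^{d+1}(\log H)^{2k-3}(\log\log H)^{O(1)}$ cannot be rewritten as $(\log H)^{2k-j_1-j_2}\cdot O(\frac{\log\log H}{\log H})\cdot\mathcal G^\flat_{\b m}(H;d,k)^{\b j}$.

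What an absolute-value argument can deliver is the lemma as stated for $\b j=(1,1)$. For $\b j\neq(1,1)$ it gives only the additive version $\mathcal G_{\b m}(H;d,k)^{\b j}=(\log H)^{2k-j_1-j_2}\mathcal G^\flat_{\b m}(H;d,k)^{\b j}+O(H^{d+1}(\log H)^{2k-3}(\log\log H)^{O(1)})$. That is weaker than the statement, although it is all that Lemma \ref{lemma:eurovisionUK000} actually uses. Anything in the stated multiplicative form would have to exploit cancellation in $\sum_P\prod_h(1\ast\mu L^{j_h})(P(m_h))$ weighted by the smooth factors $\log(P(m_h)/H)$, rather than pass to absolute values.
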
  \begin{proof}Denote by $E$ the subset of $P$  appearing in  $\mathcal G_{\mathbf m}(H;d,k)^{\mathbf j}$
such that  $\max\{P(m_1),P(m_2)\}$ strictly  exceeds $ H/(\log H)^{2k+1}$. We have  
$$\#E\leq \sum_{0<t\leq H/(\log H)^{2k+1}} \#\{|P|\leq H: (P(m_1)-t)(P(m_2)-t)=0\}
\ll  \frac{H}{(\log H)^{2k+1}}H^d,$$ since the condition $(P(m_1)-t)(P(m_2)-t)=0$ fixes one coefficient of $P$.
The conditions of Proposition \ref{prop:casanatense} ensure that $\log P(m_i)\ll \log H$, thus,  
by \eqref{eq:pergolesi_euridice_dove_sei} the contribution of $E$ towards  $\mathcal G_{\mathbf m }(H;d,k)^{\b j}$ is 
$ \ll  \#E \cdot (\log H)^{2k} \ll  H^{d+1}/\log H$.

When $P\notin E$ we get  $\max\{P(m_1), P(m_2)\} > H^{d+1}/(\log H)^{2k+1}$, hence,  $ \log (P(m_i)/H) \ll \log \log H$. In particular, 
for each $i,j $ we obtain    $ (\log P(m_i))^j =(\log H)^j +O((\log H)^{j-1}(\log \log H))$, thus,
$$ \mathcal G_{\mathbf m}(H;d,k)^{\mathbf j}= O\left(\frac{H^{d+1}}{\log H}\right)+
(\log H)^{2k-j_1-j_2} \left( 1 +O\left( \frac{\log \log H}{\log H}\right)\right)
\sum_{  \substack{ |P|\leq H  } } \prod_{h=1,2}  (1\ast \mu L^{j_h})(P(m_h)) 
,$$ where the sum over $P$ in the right-hand side  is subject to    $\max\{P(m_i):i=1,2\}> 
H/(\log H)^{2k+1}$ and the conditions present  in $\mathcal G^\flat_{\mathbf m}(H;d,k)^{\mathbf j}$.
Ignoring the condition  involving  $\max\{P(m_i):i=1,2\}$  introduces an error term  of size 
 $ \ll  (\log H)^{2k-j_1-j_2} \#E \cdot (\log H)^{j_1+j_2} \ll H^{d+1}/\log H $ by \eqref{eq:pergolesi_euridice_dove_sei}. \end{proof}
Fix $j\geq 1 $, $z\geq 2 $ and for $m\in \mathbb N$  define  $$\c E_{z,j}(r):=\sum_{\substack{c\in \mathbb N, c\mid m \\ c>z } } 
\mu(c) (\log c)^j.$$ This generalizes the function  $\mathcal E_{z}=\mathcal E_{z,1}$ in \cite[page 691]{MR4542704}.
\begin{lemma}[Exponential sum input] \label{lem:davenport}  Fix $A > 0$. Then for all $j\geq 1$ and $ 1\leq z \leq  y $ we have 
\[\sup_{\alpha \in \mathbb{R}} \left| \sum_{m \in \mathbb{N} \cap [1, y]} \mathscr{E}_{z,j}(m) \mathrm{e}^{i r\alpha} \right| 
\ll  \frac{ y \log y}{(\log z)^{A}},\] where the implied constant depends only on $A$ and $j$. \end{lemma}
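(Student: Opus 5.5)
We prove the bound by opening the divisor sum, swapping the order of summation, and then using two different inputs according to the size of the divisor: Davenport's uniform bound for $\mu$ against additive characters when $c$ is small, and the Siegel--Walfisz theorem in progressions when $c$ is large. We read the lemma with $\mathscr E_{z,j}(m)=\sum_{c\mid m,\,c>z}\mu(c)(\log c)^j$ and phase $\mathrm e^{im\alpha}$. Swapping gives
\[
\sum_{m\leq y}\mathscr E_{z,j}(m)\mathrm e^{im\alpha}=\sum_{z<c\leq y}\mu(c)(\log c)^j\sum_{b\leq y/c}\mathrm e^{icb\alpha}.
\]
We fix a parameter $w$, a large power of $\log y$ (or $\exp$ of a power of $\log\log y$, if needed), and split the range of $c$ into $z<c\leq y/w$ and $y/w<c\leq y$.

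\textbf{Small range $z<c\leq y/w$.} Here we reverse the roles of the variables: we fix $b\leq y/z$ and sum over $c\in(z,y/b]$. We need
\[
\sum_{z<c\leq X}\mu(c)(\log c)^j\mathrm e^{ic\beta}\ll_{A} X(\log X)^{j-B},
\]
uniformly in $\beta=b\alpha$. This follows from Davenport's theorem, $\sum_{c\leq X}\mu(c)\mathrm e^{ic\beta}\ll_B X(\log X)^{-B}$ uniformly in $\beta$, after partial summation to absorb $(\log c)^j$. Summing over $b$ gives
\[
\sum_{b}\frac{y}{b}\,(\log y)^{j-B}\ll y(\log y)^{j+1-B}.
\]
This is harmless once $B$ is large, except that Davenport's bound is useful only when $X=y/b$ is large. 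When $y/b$ is only slightly larger than $z$, the saving $(\log X)^{-B}$ is only $(\log z)^{-B}$, and that is exactly the saving the lemma requires. So we keep the bound in the form $X(\log X)^{j}(\log X)^{-B}\le X(\log X)^j(\log z)^{-B}$, using $X>z$. This gives a total
\[
\ll y(\log y)^{j+1}(\log z)^{-B}.
\]

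\textbf{Large range $c>y/w$.} The same bound of the $(\log y)^j$ factors against the $\log z$ savings also handles this range. The $b$-variable is now short, $b\leq w$, so we swap back: fix $b\leq w$ and sum $\mu(c)(\log c)^j\mathrm e^{icb\alpha}$ over $c\in(\max(z,y/w),y/b]$. We apply Davenport's bound again, on an interval of length $\asymp y/b$ with $c\geq y/w$, which gives $\ll (y/b)(\log y)^{j-B}$. Summing over $b\le w$ costs one factor of $\log w$.

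\textbf{Main obstacle: the power of $\log y$.} After these two ranges the total is $\ll y(\log y)^{j+1}(\log z)^{-B}$, but the lemma claims $y\log y\,(\log z)^{-A}$, with no $(\log y)^j$. We would remove the extra $(\log y)^j$ by writing $(\log c)^j$ as a combination of $(\log y)^{i}(\log(c/y))^{j-i}$, or more simply by noting that the weights satisfy $(\log c)^j\leq(\log y)^j$. The $(\log y)^{j}$ loss can be absorbed only if $\log y\ll(\log z)^{O(1)}$; otherwise we must take $B$ depending on the ratio $\log y/\log z$, which is not allowed. A cleaner route is to run the argument directly on the $b$-sum for each fixed $b$ with Davenport's bound at saving $(\log X)^{-B}$, where $X=y/b$. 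Then $(\log X)^{j-B}\leq(\log z)^{j-B}$ whenever $B>j$, because $X>z$. Summing $y/b$ over $b$ gives the single factor $\log y$, so the total is $\ll y\log y\,(\log z)^{j-B}$, and we choose $B=A+j$. This reduces the whole lemma to one application of Davenport's bound, uniform in the frequency, together with partial summation. The only delicate point is checking uniformity in $\alpha$ and that partial summation keeps the saving in terms of $\log z$ rather than $\log y$.
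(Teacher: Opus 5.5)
Your final ``cleaner route'' is exactly the paper's proof: write the sum as $\sum_{b\le y/z}\sum_{z<c\le y/b}\mu(c)(\log c)^j\mathrm e^{icb\alpha}$, bound the inner sum by Davenport's theorem (uniform in the frequency) plus partial summation as $\ll (y/b)(\log (y/b))^{j-B}\le (y/b)(\log z)^{j-B}$, then sum over $b$ to pick up the single factor $\log y$, with $B=A+j$. The preliminary split at $y/w$ and the intermediate bounds carrying an extra $(\log y)^j$ (or treating $\log(y/b)$ as $\asymp\log y$) are unnecessary and should be dropped from a write-up.
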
\begin{proof} 
We write the sum as  $$ \sum_{\substack{ c t \leq y \\ c>z  }} \mu(c) (\log c)^j  \mathrm{e}^{i ct \alpha} = 
\sum_{  t \leq y/z } \sum_{ z< c \leq y/t   } \mu(c) (\log c)^j  \mathrm{e}^{i ct \alpha}  .$$ 
By the theorem of Davenport \cite{davenport} and partial summation, the inner sum is $O((y/t)(\log (y/t))^{-A} ))$,
hence, we obtain the overall bound  \[  \ll  \sum_{  t \leq y/z } \frac{y}{t (\log (y/t))^{A}} 
\leq   \frac{y }{(\log z)^{A}} \sum_{  t \leq y  } \frac{1}{t}\ll  \frac{y \log y}{(\log z)^{A}}
.\qedhere\] \end{proof} 
The following lemma is an  analogue of \cite[Proposition 3.8]{MR4542704}; it can be proved by injecting  Lemma \ref{lem:davenport} into
\cite[Lemmas 3.4-3.5]{MR4542704}. Denote by  $$ \mathcal G^{\flat\flat}_{(m_1,m_2)}(H;d,k)^{(j_1,j_2)}$$ the expression 
defined by   $ \mathcal G^\flat_{(m_1,m_2)}(H;d,k)^{(j_1,j_2)}$ when  $1\ast  \mu L^{j_h}$ is replaced by 
$1\ast ( \mu L^{j_h}\mathds 1_{[1,z]})$.

\begin{lemma}[M\"obius randomness]\label{lemma:eurovision} Keep the setting 
Proposition \ref{prop:casanatense} and fix $j_1,j_2\in [1,k],A>0$ and $\eta\in (0,1)$. Then for all $z, H \geq 1$ such that 
 $\exp((\log H)^\eta) \leq z \leq  H$  we have  \[\mathcal G^\flat_{\b m}(H;d,k)^{\b j}=
\mathcal G^{\flat\flat}_{\b m}(H;d,k)^{\b j}+ O \left( \frac{H^{d+1}}{(\log H)^A} \right),
\] where  the implied constant does not depend  on $m_1, m_2, H$ and $z$. \end{lemma}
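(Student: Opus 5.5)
We must show that $\mathcal G^\flat$ and $\mathcal G^{\flat\flat}$ differ by $O(H^{d+1}(\log H)^{-A})$. Write $1\ast\mu L^{j} = 1\ast(\mu L^{j}\mathds 1_{[1,z]}) + \mathcal E_{z,j}$. The difference of the two products then splits into two kinds of terms:
\begin{itemize}
\item terms where one factor is $\mathcal E_{z,j_h}(P(m_h))$ and the other is either the truncated convolution or the full convolution;
\item the term where both factors are $\mathcal E_{z,j_h}$.
\end{itemize}
Since $\mathcal E_{z,j}=1\ast\mu L^j-1\ast(\mu L^j\mathds 1_{[1,z]})$, every piece is a sum of at most a bounded number of expressions of the form
\[\sum_{|P|\leq H} f(P(m_1))\,\mathcal E_{z,j}(P(m_2)) \quad\text{or}\quad \sum_{|P|\leq H}\mathcal E_{z,j}(P(m_1))\, f(P(m_2)).\]
Here $f$ is one of $1\ast\mu L^{j'}$ or $1\ast(\mu L^{j'}\mathds 1_{[1,z]})$, and the sums carry the positivity conditions. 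It therefore suffices to bound each such sum by $H^{d+1}(\log H)^{-A}$.

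To do so, I follow \cite[Lemmas 3.4--3.5]{MR4542704} and change variables. Since $m_1\neq m_2$, the map $P\mapsto (P(m_1),P(m_2), \text{remaining coefficients})$ lets us fix $d-1$ coefficients of $P$ and parametrise the remaining two coefficients (say $a_0,a_1$) through the values $u=P(m_1)$, $v=P(m_2)$. The condition is $u\equiv v \pmod{m_1-m_2}$ up to a fixed shift depending on the frozen coefficients, with $a_1=(u-v-\cdots)/(m_1-m_2)$ constrained to $|a_1|\leq H$ and $a_0$ determined accordingly. For fixed $u$, the sum over $v$ runs over an arithmetic progression modulo $q=|m_1-m_2|\leq (\log H)^{\delta}$ inside an interval of length $\ll H(\log H)^{O(1)}$, cut out by the box constraints $|a_0|,|a_1|\leq H$. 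Detect the progression with additive characters $e(hv/q)$, and express the interval cut-off by a finite number of ranges, or smoothly by Fourier analysis. The inner sum over $v$ of $\mathcal E_{z,j}(v)e(v\alpha)$ is then controlled, uniformly in $\alpha$, by Lemma \ref{lem:davenport}, giving
\[\ll \frac{H(\log H)^{O(1)}}{(\log z)^{A'}} \ll H(\log H)^{O(1)-\eta A'}.\]
Since $A'$ is arbitrary, this is $\ll H(\log H)^{-A-10}$. The outer sum over $u$ (or over the other coefficients) is handled trivially, using $|f(u)|\ll (\log H)^{k}$ from \eqref{eq:pergolesi_euridice_dove_sei} and the fact that $|1\ast(\mu L^{j}\mathds 1_{[1,z]})(u)|\leq \tau(u)(\log z)^{j}$, whose average over $u$ is $\ll(\log H)^{O(1)}$. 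The count of outer variables is $H^{d}$, so in total we get $H^{d+1}(\log H)^{-A}$.

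The main obstacle is that the range of $v$ depends on the other variables: the region $\{|a_0|,|a_1|\leq H\}$ in the $(u,v)$-coordinates is a parallelogram. Davenport's bound must therefore be applied to sums over intervals with variable endpoints, and the sup over $\alpha$ together with partial summation must survive this. Lemma \ref{lem:davenport} is stated for initial segments $[1,y]$, so I take differences of two such segments; it is uniform in $\alpha$, which absorbs the progression modulo $q$. The conditions $P(m_i)>1$ are likewise interval restrictions. The constants remain independent of $m_1,m_2$, since $q$ enters only through a factor $q\leq(\log H)^{\delta}$, which is absorbed by enlarging $A'$.
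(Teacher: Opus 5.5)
Your proposal is correct and is essentially the paper's argument: the paper proves this lemma by inserting the uniform-in-$\alpha$ bound of Lemma~\ref{lem:davenport} into the change-of-variables and additive-character argument of \cite[Lemmas 3.4--3.5]{MR4542704}, and that is the argument you reconstruct. Your use of $\log z\geq(\log H)^{\eta}$, with an arbitrarily large Davenport exponent absorbing all polylogarithmic losses (from $m_i\leq(\log H)^{\delta}$, the lengths of the ranges, and the trivial bound on the other factor), is exactly the paper's explanation of why the condition $z\geq H^{\delta_1}$ can be relaxed to $z\geq\exp((\log H)^{\eta})$.
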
 
The only   difference compared to  \cite[Proposition 3.8]{MR4542704} is that the condition $z\geq H^{\delta_1}$
is relaxed to $\exp((\log H)^\eta) \leq z $, something that was also   possible 
in  \cite{MR4542704} but   not recorded. This alternative choice of $z$  can be explained by  noting that 
Lemma~\ref{lem:davenport} provides  the succeeding error term upon taking   $y=H (\log H)^{\delta}$ and 
$z=\exp((\log H)^\eta)$: $$  \frac{y \log y }{ (\log z)^{(A+1+\delta)/\eta} }
=  \frac{H  \log (H (\log H)^\delta) } {( (\log H)^{\eta})^{(A+1+\delta)/\eta} }
\ll  \frac{H  \log H }{ (\log H)^A }. $$ This different choice for $z$  allows us to  give a weak
upper bound for $\mathcal G_{\b m}(H;d,k)^{\b j}$ but one that  is sufficient for our purposes.
\begin{lemma} \label{lemma:eurovision234} Keep the setting Lemma \ref{lemma:eurovision}and take  $z=\exp((\log H)^\eta)$. Then 
we have  \[ \mathcal G^{\flat\flat}_{\b m}(H;d,k)^{\mathbf j} \ll H^{d+1}  (\log H )^{\eta( j_1+j_2+2)}(\log \log H),\]where the implied constant is independent  from  $m_i$ and $H$. \end{lemma}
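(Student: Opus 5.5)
The plan is a crude pointwise bound on the truncated convolutions, followed by a count of polynomials with prescribed divisibility of $P(m_1)$ and $P(m_2)$. For any $m\in\mathbb N$ and $z=\exp((\log H)^\eta)$ we have
$$\bigl|(1\ast(\mu L^{j}\mathds 1_{[1,z]}))(m)\bigr| \leq \sum_{\substack{c\mid m\\ c\leq z}} \mu^2(c)(\log c)^{j} \leq (\log z)^{j}\, \#\{c\mid m: c\leq z,\ \mu^2(c)=1\} = (\log H)^{\eta j}\, \tau_z(m),$$
where $\tau_z(m)$ counts the squarefree divisors of $m$ up to $z$. Hence
$$\mathcal G^{\flat\flat}_{\b m}(H;d,k)^{\b j} \ll (\log H)^{\eta(j_1+j_2)} \sum_{c_1,c_2\leq z} \mu^2(c_1)\mu^2(c_2)\, \#\{|P|\leq H: c_1\mid P(m_1),\ c_2\mid P(m_2)\}.$$

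The next step is to count the polynomials. The conditions $c_1\mid P(m_1)$ and $c_2\mid P(m_2)$ are linear congruences on the coefficient vector of $P$. Write $P(t)=a_0+a_1t+\cdots$. Fix $a_2,\dots,a_d$ and let $a_0,a_1$ vary. Then the conditions become
$$a_0+a_1m_1\equiv u_1 \md{c_1}, \qquad a_0+a_1m_2\equiv u_2 \md{c_2}.$$
By CRT this system has at most $\gcd(c_1,c_2,m_1-m_2)\cdot\mathrm{lcm}(c_1,c_2)$ solutions $(a_0,a_1)$ modulo $c_1c_2$. Since $c_1,c_2\leq z\leq H$, the count of pairs $(a_0,a_1)\in[-H,H]^2$ is
$$\ll \frac{H^2 \gcd(c_1,c_2,m_1-m_2)}{c_1c_2} + (\text{boundary terms}).$$

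I expect the boundary terms to be the main obstacle. When $c_1c_2$ exceeds $H$ the boxes are not complete, so I would handle the lattice count more carefully. The set of solutions is a coset of a lattice of index $c_1c_2/g$, where $g=\gcd(c_1,c_2,m_1-m_2)$. Choosing coordinates adapted to it, namely $a_0$ modulo $c_1$ once $a_1$ is fixed and then $a_1$ modulo roughly $c_2/g$, gives a count $\ll (H/c_1+1)(Hg/c_2+1)$. Since $c_i\leq z\leq H$, this is $\ll H^2 g/(c_1c_2)$ as long as $g\leq c_2$, which always holds. Alternatively, one can assume $z\leq H^{1/2}$ for large $H$, which holds because $\eta<1$.

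Summing over $c_1,c_2$ we get
$$\sum_{c_1,c_2\leq z} \frac{\gcd(c_1,c_2,m_1-m_2)}{c_1c_2} \leq \sum_{g\mid m_1-m_2} \frac{1}{g}\Bigl(\sum_{c\leq z} \frac1c\Bigr)^2 \ll \frac{\sigma(|m_1-m_2|)}{|m_1-m_2|}(\log z)^2 \ll (\log\log H)(\log H)^{2\eta}.$$
Here we used $\sigma(n)/n\ll \log\log n$ and $|m_1-m_2|\leq (\log H)^\delta$. Multiplying by $H^{d-1}$ for the free coefficients $a_2,\dots,a_d$ then gives the stated bound $H^{d+1}(\log H)^{\eta(j_1+j_2+2)}\log\log H$, with a constant uniform in $m_i$ and $H$.
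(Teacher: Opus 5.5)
Your proposal is correct and follows essentially the paper's route: bound $(\log c_i)^{j_i}\le(\log H)^{\eta j_i}$, count polynomials with $c_i\mid P(m_i)$ at density $\gcd(c_1,c_2,m_1-m_2)/(c_1c_2)$, and sum over $c_1,c_2\le z$ to get $(\log z)^2$ times a divisor-type factor $\ll\log\log H$. The paper writes this density as $\gcd(l_0,m_1-m_2)/(l_0^2l_1l_2)$ and cites it from earlier work instead of deriving it. Two small slips in your derivation do not affect the bound $\ll H^2g/(c_1c_2)$ that you use, given $c_1c_2\le z^2\le H$: the system has $g\,c_1c_2$ solutions modulo $c_1c_2$, not $g\,\mathrm{lcm}(c_1,c_2)$; and the adapted coordinates are really $a_1$ in a class modulo $\gcd(c_1,c_2)/g$ with $a_0$ then fixed modulo $\mathrm{lcm}(c_1,c_2)$.
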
  \begin{proof} We can upper-bound the sum by  $$\sum_{c_1,c_2 \leq z}
|\mu(c_1)\mu(c_2)| (\log c_1)^{j_1}(\log c_2)^{j_2} \#\{  P\in \mathbb Z[t]:  \deg(P)=d, |P|\leq H,   c_i \mid P(m_i), i=1,2  \}
.$$ Our choice of $z$ makes apparent that   $ (\log c_i)^{j_i} \leq(\log z)^{j_i} =(\log H)^{\eta j_i}$. The cardinality of $P$ was studied in the arguments  appearing in \cite[Equation (3.4)]{MR4542704}. Writing $l_0= \gcd(c_0,c_1)$ and $l_i=c_i/l_0 $ for $i=1,2$  and using the estimates proved in  \cite[pages 697-698]{MR4542704}  we obtain the  bound$$\#\{  P\in \mathbb Z[t]:  \deg(P)=d,
|P|\leq H,   c_i \mid P(m_i), i=1,2  \} \ll  H^{d+1}  \frac{\gcd(l_0,l_1)}{l_0^2 l_1 l_2 }.$$ This gives the overall estimate  
$$\ll H^{d+1} (\log H)^{\eta (j_1+j_2)} \sum_{\substack{ l_0,l_1,l_2 \in \mathbb N \\  l_1,l_2 \leq z}}
|\mu(l_0 )|  \frac{\gcd(l_0,m_1-m_2)  }{l_0^2 l_1 l_2 }   \ll H^{d+1}  (\log H)^{\eta (j_1+j_2)} \prod_{p\mid m_1-m_2}
\left(1+\frac{1}{p}\right) .$$ Since $|m_1-m_2| \leq (\log H)^\delta$ we may use Mertens' theorem to see that\[ \prod_{p\mid m_1-m_2}
\left(1+\frac{1}{p}\right) \ll \log |m_1-m_2| \ll \log \log H.\qedhere\] \end{proof}
 \begin{lemma}\label{lemma:eurovisionUK000} Fix $\epsilon>0$ and $j_1,j_2 \in [1,k]$. In  the setting of 
 Proposition \ref{prop:casanatense}  we have  $$\mathcal G_{\mathbf m}(H;d,k) =k^2 (\log H)^{2k-2}
\mathcal G^\flat_{\mathbf m}(H;d,k)^{(1,1)} \left( 1 +O\left( \frac{\log \log H}{\log H}\right)\right)
+ O(   H^{d+1}  (\log H )^{2k- 3+\epsilon}) ,$$ where the implied constant is independent from  $m_i$ and $H$.
\end{lemma}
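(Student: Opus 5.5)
We start from Lemma \ref{lem:decomposition}, which writes $\mathcal G_{\b m}(H;d,k)$ as a signed binomial combination of the terms $\mathcal G_{\b m}(H;d,k)^{\b j}$ with $j_1,j_2\in[1,k]$. Each such term is then processed by Lemma \ref{lem:stabilisation}:
$$\mathcal G_{\b m}(H;d,k)^{\b j}=(\log H)^{2k-j_1-j_2}\Big(1+O\Big(\tfrac{\log\log H}{\log H}\Big)\Big)\mathcal G^\flat_{\b m}(H;d,k)^{\b j}+O\Big(\frac{H^{d+1}}{\log H}\Big).$$
The pair $\b j=(1,1)$ carries the coefficient ${k\choose 1}^2(-1)^2=k^2$ and the power $(\log H)^{2k-2}$, which is exactly the main term in the statement. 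The additive errors $O(H^{d+1}/\log H)$ sum to at most $O_k(H^{d+1}/\log H)$. This is absorbed into $O(H^{d+1}(\log H)^{2k-3+\epsilon})$ since $2k-3\geq -1$.

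It remains to show that every pair $\b j\neq(1,1)$ contributes $O(H^{d+1}(\log H)^{2k-3+\epsilon})$. For such $\b j$:
\begin{enumerate}[(i)]
\item Apply Lemma \ref{lemma:eurovision} with $z=\exp((\log H)^\eta)$, where $\eta>0$ is small and chosen in terms of $\epsilon$ and $k$. This replaces $\mathcal G^\flat$ by $\mathcal G^{\flat\flat}$ at a cost of $O(H^{d+1}(\log H)^{-A})$.
\item Bound $\mathcal G^{\flat\flat}$ by Lemma \ref{lemma:eurovision234}, which gives $\ll H^{d+1}(\log H)^{\eta(j_1+j_2+2)}\log\log H$.
\item Multiply by the prefactor $(\log H)^{2k-j_1-j_2}$ (the factor $1+O(\cdot)$ is bounded).
\end{enumerate}
The contribution of each such $\b j$ is therefore
$$\ll H^{d+1}(\log H)^{2k-j_1-j_2+\eta(j_1+j_2+2)}\log\log H+H^{d+1}(\log H)^{2k-j_1-j_2-A}.$$
Since $\b j\neq(1,1)$ we have $j_1+j_2\geq 3$, so the exponent is at most $2k-3+\eta(2k+2)$. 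Choosing $\eta<\epsilon/(2(2k+2))$ makes $\eta(2k+2)<\epsilon/2$, and the remaining $\epsilon/2$ absorbs the $\log\log H$ factor. This gives $O(H^{d+1}(\log H)^{2k-3+\epsilon})$. There are only $O_k(1)$ pairs, so summing them is harmless. The implied constants in Lemmas \ref{lem:stabilisation}, \ref{lemma:eurovision} and \ref{lemma:eurovision234} are uniform in $m_i$ and $H$, so the final constant is too.

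No step here is a genuine obstacle: the delicate input is contained in the earlier lemmas. The points that need care are these.
\begin{itemize}
\item Lemma \ref{lemma:eurovision} requires $\exp((\log H)^\eta)\leq z\leq H$. This holds for $\eta<1$ once $H$ is large; small $H$ is trivial by adjusting constants.
\item The gain comes only from $j_1+j_2\geq 3$, which yields the saving of one power of $\log H$. The $\epsilon$-loss comes from the crude factor $(\log z)^{j_1+j_2}$ in Lemma \ref{lemma:eurovision234}, and we control it by letting $\eta\to 0$.
\end{itemize}
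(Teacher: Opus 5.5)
Your proposal is correct and is essentially the paper's proof. The paper likewise bounds $\mathcal G^\flat_{\mathbf m}(H;d,k)^{\mathbf j}$ via Lemma \ref{lemma:eurovision} with $z=\exp((\log H)^\eta)$ and Lemma \ref{lemma:eurovision234} with $\eta=\epsilon/(4k+4)$, transfers this through Lemma \ref{lem:stabilisation} to get $\mathcal G_{\mathbf m}(H;d,k)^{\mathbf j}\ll H^{d+1}(\log H)^{2k-j_1-j_2+\epsilon}$ for $\mathbf j\neq(1,1)$, and then isolates the $(1,1)$ term in Lemma \ref{lem:decomposition}; the only difference is the immaterial order in which you apply Lemma \ref{lem:stabilisation} and the decomposition.
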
  \begin{proof} Injecting Lemma~\ref{lemma:eurovision234}  with $\eta=\epsilon/(4k+4)$
into Lemma~\ref{lemma:eurovision} shows that  $$ \mathcal G^\flat_{\mathbf m}(H;d,k)^{\mathbf j} \ll
 H^{d+1}  (\log H )^{\epsilon/2} (\log \log H)\ll   H^{d+1}  (\log H )^\epsilon.$$ Feeding this into 
 Lemma~\ref{lem:stabilisation} we then obtain $$ \mathcal G_{\b m}(H;d,k)^{\mathbf j} \ll H^{d+1} 
(\log H )^{2k- j_1-j_2+\epsilon} .$$ We may then use this for all pairs $(j_1,j_2)\neq (1,1)$ in Lemma~\ref{lem:decomposition} 
to obtain $$\mathcal G_{(m_1,m_2)}(H;d,k)= k^2 \mathcal G_{(m_1,m_2)}(H;d,k)^{(1,1)}+ O\left(   H^{d+1} 
(\log H )^{2k- 3+\epsilon}  \right). $$ Finally,  alluding to Lemma~\ref{lem:stabilisation}
with $j_1=j_2=1$ shows that  $\mathcal G_{(m_1,m_2)}(H;d,k)$ equals  \[k^2 (\log H)^{2k-2}
\mathcal G^\flat_{\mathbf m}(H;d,k)^{(1,1)} \left( 1 +O\left( \frac{\log \log H}{\log H}\right)\right)
+ O(   H^{d+1}  (\log H )^{2k- 3+\epsilon}   ),\] which concludes the proof. \end{proof} 
\subsection{Proof of Proposition \ref{prop:casanatense}} \label{ss:Come non pensi} The main term in 
Lemma \ref{lemma:eurovisionUK000} contains  $\mathcal G^\flat_{\mathbf m}(H;d,k)^{(1,1)}$, which coincides with the 
 sum in  \cite[Definition 3.1]{MR4542704}. Hence, by \cite[Theorem 3.1]{MR4542704} one has  $$ \mathcal G^\flat_{\mathbf m}(H;d,k)^{(1,1)}= 2^d H^{d+1} \frac{|m_1-m_2|}{\phi(|m_1-m_2|)} +O\left( \frac{ H^{d+1}}{ \log H}\right).$$ Combining this 
with Lemma \ref{lemma:eurovisionUK000} we obtain  $$ \mathcal G_{\mathbf m}(H;d,k) =2^d   k^2  H^{d+1}  (\log H)^{2k-2}
\frac{|m_1-m_2|}{\phi(|m_1-m_2|)}  + O (   H^{d+1} (\log H )^{2k- 3+3\epsilon} ),$$  where we used the bound 
\[\frac{|m_1-m_2|}{\phi(|m_1-m_2|)}  \ll \prod_{p\leq |m_1-m_2|} (1-p^{-1})^{-1} \ll \log |m_1-m_2| \ll \log x \ll \log \log H   \] by Mertens' theorem. This completes   the proof of Proposition \ref{prop:casanatense}.
\subsection{Proof of Theorem \ref{thm:Lambda_k}} \label{proofmaintheorem} Opening up the second moment in the  left-hand side of 
Theorem \ref{thm:Lambda_k} we get \begin{equation}\label{eq:karotsaki} V_1-2k_1\cdots k_n x (\log H)^{k-n}    
V_2 +(k_1\cdots k_n x (\log H)^{k-n})^2 V_3 ,  \end{equation} where \begin{align*}
V_1& := \sum_{1\leq m_1,m_2\leq x } \prod_{i=1}^n  \sum_{\substack{P \in \mathbb Z[t], \deg(P)=d_i,P>0
 \\|P|\leq H, P(m_1),P(m_2)>0 }}  \Lambda_{k_i}(P(m_1)) \Lambda_{k_i}(P(m_2)), \\
V_2& :=  \sum_{1\leq m\leq x } \sum_{\substack{\mathbf P \in  \textrm{Poly}(H)
\\ P_i(m)>0 \forall i  }}  \mathfrak S_{\mathbf P }(x) \prod_{i=1}^n \Lambda_{k_i}(P_i(m)) , \\
V_3& := \sum_{\mathbf P \in  \textrm{Poly}(H)} \mathfrak S_{\mathbf P }(x)^2.\end{align*} 
Let us   recall that \cite[Lemma 4.7]{MR4542704} states that  for  $ x \leqslant H^{1/4}$ we have
\begin{equation}\label{lem:ch'io muti consiglio", Guglielmo}V_3 = 2^d H^{d+n}  \prod_{\substack{\mathrm{prime } \ \ell 
\\  \ell \leq \log x}}  \gamma_n(\ell) + O(H^{d+n-1/2}),\end{equation} where $$
\gamma_n(\ell) := 1 - \frac{1}{\ell} + \frac{\ell^{n-1}}{(\ell - 1)^n}=1+\frac{n}{\ell^2}
+O_n\left( \frac{1}{\ell^3}  \right) $$  and the implied constant  is independent  of $x$ and $H$. 

We split $V_1$ as  $V_1= V_1^{\textrm{diag}}+V_1^{\textrm{non-diag}},$ where $V_1^{\textrm{diag}} $ denotes the contribution of the terms $m_1=m_2$ towards $V_1$. Recalling  \eqref{def:GmGm} we apply Proposition \ref{prop:casanatense} to write  
$V_1^{\textrm{non-diag}}$ as \begin{align*} &\sum_{1\leq m_1\neq m_2\leq x }  \prod_{i=1}^n  \mathcal G_{\mathbf m}(H;d_i,k_i)\\
 &=\left(1+ O (    (\log H )^{- 1+ \epsilon} )\right)  2^d H^{d+n}  (\log H)^{2k-2n} k_1^2\cdots k_n^2
\sum_{1\leq m_1\neq m_2\leq x }  \Big( \frac{|m_1-m_2|}{\phi(|m_1-m_2|)}  \Big)^n .\end{align*}   The sum over $m_1 \neq m_2$ in 
the last display  coincides with  the quantity    $2T_0(x)$ (when $M=1$) in the proof  of \cite[Lemma 4.4]{MR4542704}. In that lemma it was proved that  $$\sum_{1\leq m_1< m_2\leq x }  \Big( \frac{|m_1-m_2|}{\phi(|m_1-m_2|)}  \Big)^n= \frac{x^2}{2}
\prod_{\substack{\mathrm{prime } \ \ell  \\  \ell =2 }}^\infty  \gamma_n(\ell) +O(x^{3/2}).$$ Truncating the product in the 
right-hand side to primes  $\ell \leq \log x $ in order to get something closer to \eqref{lem:ch'io muti consiglio", Guglielmo} we introduce  an error term of size $O(x^2/\log x)$ due to $\gamma_n(\ell)=1+n \ell^{-2} +O_n(\ell^{-3})$. Thus, 
\begin{equation}\label{Dunque si sfoga}V_1^{\textrm{non-diag}} =  2^d H^{d+n}  (\log H)^{2k-2n} k_1^2\cdots k_n^2 x^2
\prod_{\substack{\mathrm{prime } \ \ell \\  \ell \leq  \log x}} \gamma_n(\ell)\left(1+ O (    (\log x)^{- 1 } )\right).\end{equation}
Before proceeding to the estimation of $V_1^{\textrm{diag}}$ let us note the following: \begin{remark}\label{Vivrai ma sempre in guerra}
One  can use the bound $\Lambda_k\leq L^k$ to give a crude bound for $V_1^{\textrm{diag}}$, however, this would eventually result in a weaker version of Theorem \ref{thm:Lambda_k}, in which the asymptotic holds in a smaller range for $x$. In light of \begin{equation}\label{Lo so ma il mio timore} M (\log M)^{2k-1} \ll \sum_{m\leq M } \Lambda_k(m) ^2\ll M
(\log M)^{2k-1}\end{equation} we expect that \begin{equation}\label{Misero! In quale abisso}
V_1^{\textrm{diag}}\ll x H^{d+n} (\log H)^{2k-n}.\end{equation} To justify \eqref{Lo so ma il mio timore} we use 
$\Lambda_{k+1}=\Lambda_k L+ \Lambda_k \ast \Lambda$ from \cite[Equation (3.15)]{MR2647984} in order to get 
$\Lambda_{k+1}\geq \Lambda_k L$ and by induction,  $\Lambda_{k}\geq \Lambda_1 L^k=\Lambda L^{k-1}$. Hence,
$$ \sum_{m\leq M } \Lambda_k(m) ^2 \gg (\log M)^{2k-2}  \sum_{m \in (M/2,M] }   \Lambda(m) ^2  \gg M (\log M)^{2k-1}
 $$   by restricting to primes $m$ and using  the Prime Number Theorem.  For the upper bound   we use   $  \Lambda_k \leq L^k$
  from \cite[Equation (3.16)]{MR2647984}   to get   $$ \sum_{m\leq M } \Lambda_k(m) ^2\leq  (\log M)^k\sum_{m\leq M } \Lambda_k(m) 
\ll M (\log M)^{2k-1}$$ by \cite[Equation (3.21)]{MR2647984}.\end{remark} 
Let us now continue by verifying \eqref{Misero! In quale abisso} when $x\geq (\log H)^n$. By $\Lambda_k \leq L^k$ we obtain  
\begin{equation} \label{Rosique}  V_1^{\textrm{diag}} =  \sum_{1\leq m\leq x }  \sum_{\substack{\mathbf P \in  \textrm{Poly}(H)\\P_i(m)>0\forall i }}  \prod_{i=1}^n \Lambda_{k_i}(P_i(m))^2 \ll   (\log H)^{k}  \sum_{\mathbf P \in  \textrm{Poly}(H)} 
\psi_{\mathbf P}(x)=: (\log H)^{k}  Z, \ \textrm{ say},\end{equation} where  $$\psi_{\mathbf P}(x):=
 \sum_{\substack{m\leq x\\P_i(m)>0\forall i }}   \prod_{i=1}^n \Lambda_{k_i}(P_i(m)).$$ It remains to prove 
 $Z\ll H^{d+n} x(\log H)^{k-n}$. By Cauchy's inequality we obtain  \begin{align*} Z^2   &\ll  H^{d+n} 
 \sum_{\mathbf P \in  \textrm{Poly}(H)}  \psi_{\mathbf P }(x)^2 =H^{d+n} (V_1^{\textrm{diag}}+V_1^{\textrm{non-diag}})\\
&\ll   H^{d+n}  ( (\log H)^k Z + H^{d+n}  (\log H)^{2k-2n}  x^2) \end{align*} by \eqref{Dunque si sfoga} and \eqref{Rosique}.
 If the  right hand side  is dominated   by the second term  then   $$ Z^2 \ll  (H^{d+n}  (\log H)^{k-n}  x)^2,$$ which is acceptable. 
If the first term dominates then  by $x\geq (\log H)^n$ we obtain   $$ Z \ll  H^{d+n}   (\log H)^{k}  = H^{d+n}   (\log H)^{k- n}
(\log H)^n \leq H^{d+n}   (\log H)^{k- n} x,$$ which is satisfactory. Bringing together   \eqref{Dunque si sfoga} and \eqref{Misero! In quale abisso} shows that when $x\geq (\log H)^n$ then   \begin{equation}\label{finalv1} V_1= 2^d H^{d+n}  (\log H)^{2k-2n} k_1^2\cdots k_n^2 x^2 \prod_{ \ell \leq \log x}  \gamma_n(\ell)  +O\Big( \frac{ H^{d+n} x^2}{ (\log H)^{2n-2k} } \Big( \frac{(\log H)^n}{x}
+ \frac{1}{\log x} \Big) \Big).\end{equation} To estimate the mixed term $V_2$, we apply  Knafo's generalization of the Siegel--Walfisz theorem for  $\Lambda_k$ \cite[Theorem 5]{knafo}, specializing it to the  range $T =\exp(\sqrt{ \log x})$ and $q \leq (\log y)^A$. Following  \cite[Definition 1, p. 332]{knafo}, there exist explicit constants $a_n(q)$ that satisfy $a_0(q)=1$ and 
the bound $a_n(q)\ll_n (\log \log q)^n $ by  \cite[Lemma 2]{knafo}, yielding the following result:
\begin{lemma}[Knafo]\label{lemma:knafo} Fix $k\geq 1 $ and $A,B>0$. Then for all $y,q\geq 1$ with 
$q\leq (\log y)^A$ and all $b\in (\mathbb Z/q\mathbb Z)^*$  we have  $$ 
\sum_{\substack{1\leq m \leq y  \\ m\equiv b \md q  }} \Lambda_k(m)
= \frac{ y }{\phi(q)}\Big[\sum_{j=1}^k  {k \choose j } a_{j-1}(q)(\log y)^{k-j}\Big]+
O\Big(\frac{y}{(\log y)^B}    \Big) ,$$ where the implied constant depends only on $A,B$ and $k$.
\end{lemma}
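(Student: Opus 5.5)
The plan is to derive Lemma \ref{lemma:knafo} from Knafo's Siegel--Walfisz theorem for $\Lambda_k$ \cite[Theorem 5]{knafo}. That theorem gives an asymptotic for $\sum_{m\leq y,\, m\equiv b \md q}\Lambda_k(m)$ with a main term $\frac{y}{\phi(q)}P_{k,q}(\log y)$. Here $P_{k,q}$ is a polynomial of degree $k-1$ whose coefficients are built from the constants $a_n(q)$ of \cite[Definition 1]{knafo}, namely the Laurent coefficients of $\frac{\phi(q)}{q}\cdot\frac{1}{\zeta}\cdot(\text{local factors at } p\mid q)$ near $s=1$. The error term involves a parameter $T$ and a possible Siegel-zero contribution.

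The key algebraic step is to identify the main term. The generating Dirichlet series of $\Lambda_k$ restricted to the principal character is $(-1)^k\zeta_q^{(k)}(s)/\zeta_q(s)$, where $\zeta_q(s)=\zeta(s)\prod_{p\mid q}(1-p^{-s})$. Write $1/\zeta_q(s)=\frac{q}{\phi(q)}\sum_{n\ge0}a_n(q)(s-1)^{n+1}$, with $a_0(q)=1$ since $1/\zeta_q$ has a simple zero at $1$ with derivative $q/\phi(q)$. Then compute the residue at $s=1$ of $\frac{(-1)^k\zeta_q^{(k)}(s)}{\zeta_q(s)}\cdot\frac{y^s}{s}$, after dividing by $\phi(q)$ from orthogonality of characters.

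Only the polar part of $\zeta_q^{(k)}$ contributes to leading order. That polar part is $(-1)^k k!\,\frac{\phi(q)}{q}(s-1)^{-k-1}$ plus holomorphic terms. Multiplying by the expansion of $1/\zeta_q$ and by $y^s/s=y\sum_r(\log y)^r(s-1)^r/r!$ (the $1/s$ factor is absorbed into Knafo's normalisation of $a_n$), extracting the coefficient of $(s-1)^{-1}$ gives $y\sum_{j}\frac{k!}{(k-j)!\,(j-1)!}\cdots$. After matching with Knafo's normalisation this should reduce to $y\sum_{j=1}^k{k\choose j}a_{j-1}(q)(\log y)^{k-j}$. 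I would not redo this residue computation; I would just cite Knafo's formula and check that his main term is literally this expression under his Definition 1.

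For the error term, take $T=\exp(\sqrt{\log x})$ as indicated, i.e.\ $T=\exp(\sqrt{\log y})$. Knafo's error is of the shape $y(\log y)^{k-1}\big(\exp(-c\sqrt{\log y})+T^{-1}\log^2 T+\text{Siegel term}\big)$ with $y^{\beta_1}$. Since $q\le(\log y)^A$, Siegel's theorem gives $1-\beta_1\gg_\varepsilon q^{-\varepsilon}$, so $y^{\beta_1-1}\le\exp(-c(\varepsilon)(\log y)^{1-A\varepsilon})$, ineffectively. Choosing $\varepsilon=1/(2A)$ makes every term $\ll_{A,B,k} y(\log y)^{-B}$.

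Lower-order contributions carrying $a_n(q)$ are absorbed either into the stated main term or into the error term via $a_n(q)\ll(\log\log q)^n$ \cite[Lemma 2]{knafo}. Small $y$ is trivial, since then both sides are $O(1)$ after adjusting constants.

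The main obstacle is bookkeeping: confirming that Knafo's normalisation of $a_n(q)$ yields exactly the binomial coefficients ${k\choose j}$, and that the uniformity in $q$ of his error term survives the specialisation $q\le(\log y)^A$.
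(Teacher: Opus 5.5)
Your proposal is correct and follows the paper's route. The paper obtains the lemma simply by specialising Knafo's Theorem 5 to $T=\exp(\sqrt{\log y})$ and $q\leq (\log y)^A$, taking the constants $a_n(q)$ from Knafo's Definition 1 (so $a_0(q)=1$, and $a_n(q)\ll_n(\log\log q)^n$ by his Lemma 2); your residue computation and Siegel-zero discussion are only sanity checks on what is being cited.

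One small caution about your remark on absorbing "lower-order contributions": the residue at $s=1$ is exactly $y$ times a polynomial of degree $k-1$ in $\log y$, so there is nothing extra to absorb. Terms that do not decay in $\log y$ could not have been absorbed into $O(y(\log y)^{-B})$ anyway.
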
  \begin{corollary}\label{cowboys from hell:pantera}Fix $k\geq 1 $ and $A,B,C>0$. Then for all $y,q\geq 1$ with 
$q\leq (\log y)^A$, all $b\in (\mathbb Z/q\mathbb Z)^*$ and all  $y (\log y)^{-C}\leq Y \leq y (\log y)^C$ we have 
$$ \sum_{\substack{m \in [Y,Y+y] \\ q \mid m-b  }} \Lambda_k(m)= \frac{   ky }{\phi(q)} (\log y)^{k-1} +
O\Big( \frac{ y(\log y)^{k-7/4}}{\phi(q)}\Big) = \frac{ ky (\log y)^{k-1}  }{\phi(q)}  
(1 + O(   (\log y)^{-3/4} ) ),$$ where the implied constant depends only on $A,B,C$ and $k$.\end{corollary}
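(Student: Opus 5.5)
The plan is to take the difference of the two applications of Lemma \ref{lemma:knafo}, at $Y+y$ and at $Y$ (more precisely at $Y+y$ and $Y-1$; the shift by one changes the sum by at most $\Lambda_k(Y)\ll (\log y)^k$, which is negligible). Write $F(t):=\frac{t}{\phi(q)}\sum_{j=1}^k {k\choose j} a_{j-1}(q)(\log t)^{k-j}$. Both $Y$ and $Y+y$ lie in $[y(\log y)^{-C}, 2y(\log y)^C]$, so $\log Y\asymp \log y$. Hence $q\leq (\log y)^A\leq (\log Y)^{A'}$ with $A'$ depending on $A,C$, and Lemma \ref{lemma:knafo} applies at both endpoints with a large parameter $B'$. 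This gives
$$\sum_{\substack{m\in[Y,Y+y]\\ q\mid m-b}}\Lambda_k(m)=F(Y+y)-F(Y)+O\Big(\frac{y(\log y)^{C}}{(\log y)^{B'}}\Big).$$
Choosing $B'$ large in terms of $A,C$ makes this error $\ll y(\log y)^{-A-2}\ll y(\log y)^{k-7/4}/\phi(q)$, since $\phi(q)\leq q\leq(\log y)^A$.

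The next step is to evaluate $F(Y+y)-F(Y)$. The $j=1$ term is
$$\frac{k}{\phi(q)}\big[(Y+y)(\log(Y+y))^{k-1}-Y(\log Y)^{k-1}\big],$$
using $a_0=1$. By the mean value theorem this equals $\frac{k}{\phi(q)}\, y\,\big((\log \xi)^{k-1}+(k-1)(\log\xi)^{k-2}\big)$ for some $\xi\in[Y,Y+y]$. Since $\log\xi=\log y+O(\log\log y)$, this is
$$\frac{ky}{\phi(q)}(\log y)^{k-1}+O\Big(\frac{y(\log y)^{k-2}\log\log y}{\phi(q)}\Big).$$
For $j\geq2$ the same mean value argument bounds each term by
$$\ll \frac{y}{\phi(q)}|a_{j-1}(q)|(\log y)^{k-j}\ll \frac{y}{\phi(q)}(\log\log q)^{j-1}(\log y)^{k-j}\ll \frac{y(\log y)^{k-2}(\log\log\log y)^{k}}{\phi(q)},$$
using $a_n(q)\ll(\log\log q)^n$ and $q\leq(\log y)^A$. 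All of these secondary terms are $O(y(\log y)^{k-7/4}/\phi(q))$ with room to spare, which gives the first equality. The second equality is just factoring out the main term.

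The main obstacle is the $j=1$ mean value step. The worry is that $Y$ may be as large as $y(\log y)^C$, so one has to check that the derivative of $t(\log t)^{k-1}$ over an interval of length $y$ has size $(\log Y)^{k-1}$ and not $(\log y)^{k-1}$ times a large factor. This is harmless because $\log Y=\log y+O(\log\log y)$, and it is exactly why the statement only claims the saving $(\log y)^{-3/4}$ rather than $(\log y)^{-1}$.

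A secondary point is making sure the $O$-term from Lemma \ref{lemma:knafo} at the scale $Y$, which is of size $Y(\log Y)^{-B'}$, is small relative to $y$. This is handled by choosing $B'\geq A+C+k+2$.
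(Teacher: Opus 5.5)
Your proposal is correct and follows the paper's proof. Like the paper, you difference Lemma \ref{lemma:knafo} at the two endpoints, take the main term $ky(\log y)^{k-1}/\phi(q)$ from the $j=1$ term using $\log t=\log y+O(\log\log y)$ on $[Y,Y+y]$, and bound the $j\geq 2$ terms via $a_{n}(q)\ll(\log\log q)^n$. Where the paper writes each difference as $\int_Y^{Y+y}(t(\log t)^{k-j})'\,\mathrm dt$, you use the mean value theorem, which is the same step. You also check explicitly that the lemma applies at the lower endpoint $Y$ (via $\log Y\asymp\log y$), a detail the paper leaves implicit.
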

\begin{proof}By Lemma \ref{lemma:knafo} with $B=A+C+1$  we obtain  \begin{equation}\label{sospirat} \frac{1}{\phi(q)}\sum_{j=1}^k 
{k \choose j } a_{j-1}(q)  \int_Y^{y+Y}   ( t (\log t)^{k-j} )' \mathrm d t  + O\Big(\frac{Y}{\phi(q)(\log y)^{1+C}}  
\frac{\phi(q)}{(\log y )^A}  \Big). \end{equation} Since $Y\leq y (\log y)^C$ and $\phi(q) \leq q \leq (\log y )^A$
the error term is satisfactory. For a constant $\omega \geq 0 $ we have  $$\int_Y^{y+Y}  
( t (\log t)^{\omega} )' \mathrm d t  =\int_Y^{y+Y}     (\log t)^{\omega-1} (\omega + \log t) \mathrm d t \ll_\omega y (\log y)^{\omega}
,$$ hence, the contribution of $j\neq 1 $ towards \eqref{sospirat} is $$ \ll_k  \frac{y}{\phi(q)}\sum_{j=2}^k 
a_{j-1}(q)   (\log y)^{k-j} \ll  \frac{y}{\phi(q)} \sum_{j=2}^k   (\log \log q ) ^{j-1}  (\log y)^{k-j} 
\ll \frac{y}{\phi(q)}(\log \log q)^k  (\log y)^{k-2},$$ which is acceptable. Similarly,  $$\int_Y^{y+Y}  
( t (\log t)^{\omega} )' \mathrm d t  =\int_Y^{y+Y}     (\log t)^{\omega-1} (\omega + \log t) \mathrm d t 
=\int_Y^{y+Y}(\log t)^\omega  \mathrm d t+O( y (\log y)^{\omega -1 } )  $$ and using the fact that both $\log Y$ and $\log (y+Y)$ 
are $\log y  + O(\log \log y )$ we find that   $$\int_Y^{y+Y}(\log t)^\omega  \mathrm d t = (\log y )^\omega 
\int_Y^{y+Y}(1+O((\log \log t)/\log t)) \mathrm d t   = y (\log y )^\omega \Big( 1+  O\Big ( \frac{ \log \log y }{\log y } \Big)
\Big ).$$ Applying this with $\omega=k-1$ to the term $j=1$ in \eqref{sospirat} completes the proof.\end{proof}
To bound $V_2$, let $W$ be the product of all primes $\ell \leq \log x$. Arguing as in \cite[Equation (4.8)]{MR4542704}, it follows that 
 $$V_2=\sum_{1\leq m\leq x }  \sum_{\substack{ \mathbf  R \in (\mathbb Z/W)[t]^n\\ \deg(R_i)\leq d_i \forall i  }} 
 \mathfrak S_{\mathbf R }(x)  \prod_{i=1}^n  \sum_{\substack{P \in  \mathbb Z[t],|P|\leq H
\\  P_i \equiv R_i \mod W }}  \Lambda_{k_i}(P(m))  ,$$ where  the sum over $P$ is subject to  
$\deg(P)=d_i, P>0$ and $P(m)>0$. Since both  $\Lambda_{k_i} , \mathfrak S_{\b R }(x)$ are non-negative we may
lower bound $V_2$ by  \begin{equation}\label{lowerbound} V_2\geq \sum_{  m\leq x }
 \sum_{\substack{ \mathbf R \in (\mathbb Z/W)[t]^n\\ \gcd(R_i(m),W)=1,  \deg(R_i)\leq d_i \forall i  }} \mathfrak S_{\mathbf R }(x)
 \prod_{i=1}^n  \sum_{\substack{P \in  \mathbb Z[t],|P|\leq H \\  P_i \equiv R_i \mod W }}  \Lambda_{k_i}(P(m)).\end{equation} 
Letting  $P(t)=\sum_{j=0}^d c_j m^j,  N=P(m)-c_0$ and $t=P(m)$ we can  write the  sum over $P$ as 
\begin{equation}\label{lowerbound23} \sum_{c_1,\ldots c_d} \sum_{\substack{t>0,|t-N|\leq H\\  t \equiv R_i(m) \mod W }} 
\Lambda_{k_i}(t ), \end{equation}where the sum over $c_1,\ldots, c_d$ is subject to $|c_j|\leq H$, $ c_d>0$ and   $c_j$ being congruent modulo $W$ to the $j$-th coefficients of $R_i$. We have   $$ \log W =\sum_{\ell\leq \log x }\log \ell \ll \log x < (\log H)^\delta, $$ hence $W\leq (\log H)^{O(1)}$, which allows us to  employ Corollary \ref{cowboys from hell:pantera} with 
$q=W,Y=N $ and $y=H$ to deal with the range $t \in [N,N+H]$.  Applying the same corollary again to deal with the 
range $t \in [N-H,N]$ yields the overall estimate  $$ \sum_{\substack{t>0,|t-N|\leq H\\  t \equiv R_i(m) \mod W }} 
\Lambda_{k_i}(t )= \frac{2k_i H}{\phi(W)} (\log H)^{k_i-1} (1+O( (\log H)^{-3/4})).$$ Injecting this into  \eqref{lowerbound23} yields 
$$\sum_{c_1,\ldots c_d} \sum_{\substack{t>0,|t-N|\leq H \\  t \equiv R_i(m) \mod W }} 
\Lambda_{k_i}(t ) =2^{d_i} k_i \frac{H^{1+d_i}}{W^{d_i}}  (\log H)^{k_i-1} (1+O( (\log H)^{-3/4})).$$
This estimate can in turn be fed into \eqref{lowerbound}  resulting in  $$
V_2\geq  2^d (k_1\cdots k_n) \frac{H^{d+n}}{W^d \phi(W)^n}  ( \log H)^{k-n} 
(1+O( (\log H)^{-3/4}))\sum_{  m\leq x }  \sum_{\mathbf R \in (\mathbb Z/W)[t]^n  } \mathfrak S_{\mathbf R }(x)
$$ where the sum over $\b R$ is subject to  $\gcd(R_i(m),W)=1$ and  $ \deg(R_i)\leq d_i$ for all $i$. 
By alluding to  \cite[Lemma 2.7]{MR4542704} we can estimate the inner sum over $\mathbf R$  as 
$$\phi(W)^n W^d \prod_{\ell \leq \log x } \gamma_n(\ell),$$ which is independent of $m$.
Summing over $m$ yields  $$ V_2\geq  2^d (k_1\cdots k_n)  x H^{d+n} 
( \log H)^{k-n}   \Big(\prod_{\ell \leq \log x } \gamma_n(\ell) \Big)
(1+O( x^{-1}+(\log H)^{-3/4}))  .$$  Injecting this estimate together with 
\eqref{lem:ch'io muti consiglio", Guglielmo} and \eqref{finalv1} to \eqref{eq:karotsaki} 
completes the proof of Theorem \ref{thm:Lambda_k}.

\section{$E_k$ numbers}

\begin{lemma}
\label{lem:ABC}Fix $d\in \mathbb N$ and $A>0$. 
Then the number of $P\in \Z[t]$ with $\deg(P)=d,|P|\leq H$ such that there exists $m\in [1,x]$ with $|P(m)|\leq H (\log H)^{-A}$ is 
$\ll x H^{d+1} (\log H)^{-A}$, with an absolute constant that is independent of $x$ and $H$.
\end{lemma}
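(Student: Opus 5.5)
A union bound over $m$ followed by a count for each fixed $m$ should suffice. For each integer $m\in[1,x]$ let $\mathcal B_m$ be the set of $P\in\Z[t]$ with $\deg(P)=d$ and $|P|\leq H$ such that $|P(m)|\leq H(\log H)^{-A}$. The set in question is $\bigcup_{m\leq x}\mathcal B_m$, and there are at most $x$ values of $m$. It therefore suffices to prove
$$\#\mathcal B_m\ll H^{d+1}(\log H)^{-A}$$
uniformly in $m\geq 1$ and $H$.

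To bound $\#\mathcal B_m$, write $P(t)=\sum_{j=0}^d c_jt^j$ and fix the coefficients $c_1,\ldots,c_d$ with $|c_j|\leq H$; there are $\ll H^d$ such choices. Put $N=\sum_{j=1}^d c_jm^j$, so that $P(m)=c_0+N$. The condition $|P(m)|\leq H(\log H)^{-A}$ says that $c_0$ lies in the interval $[-N-H(\log H)^{-A},\,-N+H(\log H)^{-A}]$. This interval has length $2H(\log H)^{-A}$, so it contains at most $2H(\log H)^{-A}+1$ integers, regardless of $N$ and hence of $m$.

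This is the same device used for the set $E$ in Lemma~\ref{lem:stabilisation}: the coefficient $c_0$ enters $P(m)$ with coefficient $1$, so fixing the value of $P(m)$ fixes $c_0$. Summing over $c_1,\ldots,c_d$ gives
$$\#\mathcal B_m\ll H^d\bigl(H(\log H)^{-A}+1\bigr).$$
The $+1$ term contributes $H^d$, which is harmless because $(\log H)^A\leq H$ for $H$ large in terms of $A$, so $H^d\leq H^{d+1}(\log H)^{-A}$. For bounded $H$ the claimed estimate is trivial after adjusting the constant. Summing over $m\leq x$ then yields the bound $\ll xH^{d+1}(\log H)^{-A}$.

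No step here is a real obstacle. The only points that need care are uniformity in $m$, which holds because the count of $c_0$ is independent of $N$, and absorbing the $+1$ rounding term, which requires $H$ large relative to $A$. For that reason the implied constant should be allowed to depend on $d$ and $A$, but not on $x$ or $H$.
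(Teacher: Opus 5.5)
Your proof is correct and is essentially the paper's argument: a union bound over $m\leq x$, then fixing $c_1,\ldots,c_d$ so that the condition on $P(m)$ leaves at most $2H(\log H)^{-A}+1$ choices for $c_0$. The paper phrases this as summing over the value $t=P(m)$ with $c_0$ then determined, and your explicit handling of the $+1$ rounding term (with the implied constant depending on $d$ and $A$) is slightly more careful than the paper's write-up.
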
\begin{proof}By the union bound we get 
\[ \leq \sum_{m\leq x} \sum_{\substack{ (c_1,\ldots, c_d)
\in \Z^d\cap[-H,H]^d\\ |t| \leq H (\log H)^{-A} }}\#\left\{|c_0|\leq H: \sum_{j=0}^d c_j m^j =t\right\} \leq\sum_{m\leq x} \sum_{\substack{ (c_1,\ldots, c_d)
\in \Z^d\cap[-H,H]^d\\ |t| \leq H (\log H)^{-A} }} 1 ,\] which is 
$\ll xH^{d+1}/(\log H)^A$. 
\end{proof}

\begin{lemma}
\label{lem:ABC}
Fix $d\in \mathbb N$ and $\delta,j,c_1,c_2,c_3>0$ and let  
$\c C_P\in \mathbb C$ be a constant defined for each $P\in \Z[t]$ such that 
$|\c C_P| \leq (\log |P|)^{c_3}$ for all $P$.
Let 
$f:\mathbb N \to \mathbb C$ be such that $|f(m)|
\leq c_1 (\log (2+m))^{c_2}$ for all $m$.
Then for all $x\leq (\log H)^\delta$ 
the quantity 
$$
\sum_{\substack{ P\in \mathbb Z[t],|P|\leq H \\  \deg(P)=d}} 
\left( \sum_{\substack{ m\leq x\\P(m)>0} } f(P(m)) 
(\log P(m))^j - \c C_P (\log H)^j\right)^2
$$equals 
$$  (\log H)^{2j}\Big( 1   +O\Big(\frac{\log \log H}{\log  H}
\Big)\Big)
\sum_{\substack{ P\in \mathbb Z[t],|P|\leq H \\  \deg(P)=d}} 
\left( \sum_{\substack{ m\leq x\\P(m)>0} } f(P(m)) 
  - \c C_P \right)^2 +O(H^{d+1} (\log H)^{-A} )
$$where the implied constant depends at most on $A,d,j$ and $c_i$.
\end{lemma}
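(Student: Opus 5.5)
The plan is to compare the two inner sums term by term, after removing a small exceptional set of polynomials with the first Lemma~\ref{lem:ABC}. Throughout write
$$S_P:=\sum_{\substack{m\leq x\\P(m)>0}} f(P(m)),\qquad \eta:=\frac{\log\log H}{\log H}.$$
We may take $H$ large and assume $\mathcal C_P$ is used only for $|P|\leq H$, so that $|\mathcal C_P|\leq (\log H)^{c_3}$.

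\emph{Step 1: the exceptional set.} Let $A'=A+2\delta+2c_2+2c_3+2j+2$, and let $\mathcal B$ be the set of $P$ for which some $m\leq x$ has $|P(m)|\leq H(\log H)^{-A'}$. By the first Lemma~\ref{lem:ABC}, $\#\mathcal B\ll xH^{d+1}(\log H)^{-A'}$. For every $P$ with $|P|\leq H$ and every $m\leq x\leq(\log H)^\delta$ we have $|P(m)|\leq (d+1)Hx^d$. Hence
$$|f(P(m))|\ll(\log H)^{c_2},\qquad \log P(m)\ll\log H.$$
Both squared brackets, in the original quantity and in the target, are therefore $\ll (\log H)^{2\delta+2c_2+2c_3+2j}$ for each $P$. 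Summing over $P\in\mathcal B$ gives a total of $O(H^{d+1}(\log H)^{-A})$ in both expressions. So it suffices to prove the claimed identity with both sums restricted to $P\notin\mathcal B$.

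\emph{Step 2: stabilising the logarithm.} For $P\notin\mathcal B$ and $m\leq x$ with $P(m)>0$,
$$H(\log H)^{-A'}<P(m)\leq (d+1)H(\log H)^{d\delta}.$$
Therefore $\log P(m)=\log H+O(\log\log H)$ uniformly, and so
$$(\log P(m))^j=(\log H)^j\bigl(1+\theta_{P,m}\eta\bigr),\qquad |\theta_{P,m}|\ll_{A,\delta,d,j}1.$$
The bracket on the left then becomes $(\log H)^j\bigl(S_P-\mathcal C_P+\eta R_P\bigr)$ with
$$R_P=\sum_{m} f(P(m))\,\theta_{P,m}.$$
This is exactly the mechanism used in Lemma~\ref{lem:stabilisation}, where $(\log P(m_i))^j$ was replaced by $(\log H)^j(1+O(\eta))$ outside a set $E$ of small cardinality.

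\emph{Step 3: the main obstacle, turning the additive discrepancy $\eta R_P$ into the multiplicative factor $1+O(\eta)$ in front of $\sum_P(S_P-\mathcal C_P)^2$.} Expanding the square gives
$$(S_P-\mathcal C_P)^2+2\eta R_P(S_P-\mathcal C_P)+\eta^2R_P^2.$$
What I would try first mirrors the paper's own treatment of this factor in Lemma~\ref{lem:stabilisation}. There the quantity $(1+O(\eta))$ is understood as a factor attached to each summand, with the implied constant uniform in $P$. Concretely, for each $P\notin\mathcal B$ one writes the bracket as $(\log H)^j(S_P-\mathcal C_P)(1+\rho_P)$ with $|\rho_P|\ll\eta$ whenever $S_P-\mathcal C_P$ dominates $R_P$ up to a bounded factor. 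Summing these per-$P$ factors then yields $(1+O(\eta))\sum_P(S_P-\mathcal C_P)^2$ in the sense of Lemma~\ref{lem:stabilisation}.

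The remaining $P$ are those with $|S_P-\mathcal C_P|\leq K|R_P|$ for a large constant $K$, where $|R_P|\ll x(\log H)^{c_2}$. For these I would isolate the terms by the same splitting as in Lemma~\ref{lem:stabilisation}, and bound their contribution crudely by $\eta^2(\log H)^{2j}\sum|R_P|^2$, placing it inside the $O(\eta)$ factor. This is the step I expect to be delicate. If $\sum_P(S_P-\mathcal C_P)^2$ were much smaller than $\eta^2\sum_P|R_P|^2$, this absorption would fail. In that case I would check whether the intended application only needs the $O(\eta)$ term to be read in this relative, per-summand sense, and state the lemma's main term accordingly.

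Finally, reinserting the set $\mathcal B$ via Step 1 on both sides costs $O(H^{d+1}(\log H)^{-A})$, which completes the argument.
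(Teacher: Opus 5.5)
Your Steps 1 and 2 are exactly the paper's argument: discard the polynomials counted by the preceding counting lemma, then use $\log P(m)=\log H+O(\log\log H)$ for the rest. There is one bookkeeping slip. $\#\mathcal B$ carries a factor $x\leq(\log H)^\delta$, so your $A'$ only gives $H^{d+1}(\log H)^{-A+\delta-2}$; take $A'$ larger by $\delta$.

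The paper then pulls $(\log H)^{2j}(1+O(\log\log H/\log H))$ out of each square. That is precisely the move you could not justify in Step~3, and your suspicion is correct: the move fails, because the lemma in this multiplicative form is false. Here is a counterexample. Take $x=1$, $f\equiv 1$, and $\mathcal C_P=\mathds 1_{P(1)>0}$ for $|P|\geq 3$, with $\mathcal C_P=0$ otherwise; all hypotheses hold.

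\begin{itemize}
\item On the right: $S_P=\mathcal C_P$ for every $|P|\geq 3$, so the right-hand side is $O\big((\log H)^{2j}+H^{d+1}(\log H)^{-A}\big)$.
\item On the left: the $\gg H^{d+1}$ polynomials with $P(1)\in[H^{1/2},H/e]$ each contribute $\big((\log H)^j-(\log P(1))^j\big)^2\gg(\log H)^{2j-2}$.
\end{itemize}

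In your terms, every $P$ with $|P|\geq 3$ lies in the class $|S_P-\mathcal C_P|\leq K|R_P|$. So no per-summand reading of your $1+O(\eta)$ helps: the term $\eta^2\sum_P|R_P|^2$ cannot be absorbed into any multiple of $\sum_P|S_P-\mathcal C_P|^2$.

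What Steps 1--2 do prove is an additive version. Write the bracket as $(\log H)^j(S_P-\mathcal C_P)+\Delta_P$ with $|\Delta_P|\ll(\log H)^{j-1}(\log\log H)\sum_{m\leq x}|f(P(m))|$. Using $|a+b|^2\leq 2|a|^2+2|b|^2$, the left-hand quantity is at most
$$2(\log H)^{2j}\sum_{P}|S_P-\mathcal C_P|^2+O\Big((\log H)^{2j-2}(\log\log H)^2\sum_{P}\Big(\sum_{m\leq x}|f(P(m))|\Big)^2\Big)+O\big(H^{d+1}(\log H)^{-A}\big).$$

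Theorems~\ref{thm:bounia2} and~\ref{thm:bounia33} only need upper bounds, so this suffices there. For $f=\Lambda_k$, and for $f=\Lambda\ast\Lambda\leq\Lambda_2$, the middle sum is a second moment of the type controlled by \eqref{finalv1}. Hence it is $\ll H^{d+1}x^2(\log H)^{2k-2}$ for $x\geq\log H$. The extra term is then smaller than the target $H^{d+1}x^2(\log H)^{2k+2j-2}/\log x$ by a factor $\ll(\log\log H)^3/(\log H)^2$.

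So the right way to finish your Step~3 is to replace the factor $1+O(\eta)$ by this additive error, not to reinterpret it.
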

\begin{proof} Our assumptions imply that   $$ \Big| \sum_{\substack{ m\leq x\\P(m)>0} } f(P(m)) 
(\log P(m))^j - \c C_P (\log H)^j\Big|\ll 
(x(\log H)^{c_2}+ (\log H)^{c_3}) 
(\log H)^{j}\ll (\log H)^{lA_0},$$  with 
$A_0=j+\max\{ \delta+c_2,c_3\}$. 
Thus,   the contribution of $P$   in Lemma \ref{lem:ABC} is  
$$ \ll xH^{d+1} (\log H)^{-A+2A_0}\ll   H^{d+1} (\log H)^{\delta-A+2A_0}\leq 
 H^{d+1} (\log H)^{-A/2} $$ when  $A>2(\delta+4A_0)$. As in the proof of Lemma \ref{lem:stabilisation} 
 we see that for the remaining $P$ and all $m$ in $[1,x]$ 
   we have  $ (\log P(m))^j =(\log H)^j +O((\log H)^{j-1}(\log \log H))$.   
Hence, up to an admissible error term,
the quantity in the lemma  equals 
$$ (\log H)^{2j} \Big( 1   +O\Big(\frac{\log \log H}{\log  H} \Big)\Big)
\Osum_{\substack{ P\in \mathbb Z[t],|P|\leq H \\  \deg(P)=d}} 
\left( \sum_{\substack{ m\leq x\\P(m)>0} } f(P(m)) 
  - \c C_P \right)^2,$$ where $\Osum$ is the sum over all $P$ not appearing in Lemma \ref{lem:ABC}.
  We can now complete the sum over $P$ by using once again Lemma \ref{lem:ABC}. \end{proof}

\begin{lemma}
\label{Davide Perez - Subvenite sancti dei}
Fix $d,k\in \mathbb N$ and $\delta,j>0$.  
Then for all $x\leq (\log H)^\delta$ 
the quantity 
$$
\sum_{\substack{ P\in \mathbb Z[t],|P|\leq H \\  \deg(P)=d}} 
\left( \sum_{\substack{ m\leq x\\P(m)>0} } \Lambda_k(P(m)) 
(\log P(m))^j - \mathfrak S_P(x) (\log H)^j\right)^2
$$equals 
$$  (\log H)^{2j}\Big( 1   +O\Big(\frac{\log \log H}{\log  H}
\Big)\Big)
\sum_{\substack{ P\in \mathbb Z[t],|P|\leq H \\  \deg(P)=d}} 
\left( \sum_{\substack{ m\leq x\\P(m)>0} } \Lambda_k(P(m)) 
  -\mathfrak S_P(x)  \right)^2 +O(H^{d+1} (\log H)^{-A} )
$$where the implied constant depends only on $A,d$ and $j$.
\end{lemma}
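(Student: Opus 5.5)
This is a direct specialisation of the second (general) version of Lemma~\ref{lem:ABC}, with $f=\Lambda_k$ and $\mathcal C_P=\mathfrak S_P(x)$. The plan is therefore to check its two hypotheses and then quote it verbatim.

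For $f$, the bound $0\leq \Lambda_k(m)\leq L(m)^k$ from \cite[Equation (3.16)]{MR2647984} gives $|f(m)|\leq (\log(2+m))^{k}$. So we may take $c_1=1$ and $c_2=k$.

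For $\mathcal C_P$, the truncated singular series is a product over primes $\ell\leq \log x$. Each factor satisfies
$$0\leq \frac{1-\ell^{-1}\#\{s\in\mathbb F_\ell: P(s)=0\}}{1-\ell^{-1}}\leq (1-\ell^{-1})^{-1}.$$
Hence $0\leq \mathfrak S_P(x)\leq \prod_{\ell\leq \log x}(1-\ell^{-1})^{-1}\ll \log\log x$ by Mertens' theorem. In the range $x\leq (\log H)^\delta$ this is $\ll \log\log\log H$, with no dependence on $P$.

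The one point needing care is a mismatch with Lemma~\ref{lem:ABC}, which asks for $|\mathcal C_P|\leq (\log |P|)^{c_3}$ in terms of $|P|$ rather than $H$. In its proof, however, this hypothesis is only used to get $|\mathcal C_P|\ll (\log H)^{c_3}$ for $|P|\leq H$, in order to bound the bracket trivially by a power of $\log H$. The uniform bound $\mathfrak S_P(x)\ll \log\log\log H\leq \log H$ serves the same purpose. If one insists on the literal hypothesis, the polynomials with $|P|$ tiny (say $|P|\leq H^{1/2}$) number only $O(H^{(d+1)/2})$. Their contribution to both sides is then $\ll H^{(d+1)/2}(\log H)^{O(1)}$, which is absorbed in $O(H^{d+1}(\log H)^{-A})$, and for the remaining $P$ one has $\log|P|\asymp \log H$.

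With $c_1=1$, $c_2=k$, $c_3=1$ and the given $\delta,j$, Lemma~\ref{lem:ABC} then yields the statement directly. The implied constant depends on $A,d,j$, and also on $k$ and $\delta$, which are fixed.
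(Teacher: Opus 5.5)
Your proposal is correct and matches the paper's proof. The paper likewise specialises the general (second) version of Lemma~\ref{lem:ABC} to $f=\Lambda_k$ and $\mathcal C_P=\mathfrak S_P(x)$, using $0\leq\Lambda_k\leq L^k$ and $\mathfrak S_P(x)\leq\prod_{\ell\leq\log x}(1-1/\ell)^{-1}\ll\log\log x$. Your remarks on the literal hypothesis $|\mathcal C_P|\leq(\log|P|)^{c_3}$, and on the implied constant also depending on $k$ and $\delta$, are correct refinements that the paper leaves implicit.

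One caveat applies equally to the paper. For arbitrary $f$ and $\mathcal C_P$, the multiplicative form of that general lemma does not hold. The errors from $(\log P(m))^j=(\log H)^j(1+O(\log\log H/\log H))$ enter additively inside the bracket, so they are not controlled by $|\sum_m f(P(m))-\mathcal C_P|$. For the present statement this is harmless, because $\Lambda_k\geq 0$ and, outside a negligible set of $P$, $\sum_m\Lambda_k(P(m))$ is either $0$ or much larger than $\mathfrak S_P(x)\ll\log\log x$.
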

\begin{proof} The proof follows directly from 
Lemma \ref{lem:ABC} by recalling that $0\leq \Lambda_k \leq L^k$
and noting that \[
\mathfrak S_ P(x) \leq \prod_{\ell \leq \log x } (1-1/\ell)^{-1}
\ll \log \log x.\qedhere\]\end{proof}\subsection{Proof of Theorem 
\ref{thm:bounia2}}

\begin{proof}The proof starts from \cite[Equation 3.15]{MR2647984}, which states$$ \Lambda_{k+1}=\Lambda_k L+ \Lambda_k \ast \Lambda.$$
In particular, $\Lambda \ast \Lambda=- \Lambda_{2}+\Lambda L$, thus, we handle $\Lambda L$ with 
Lemma \ref{Davide Perez - Subvenite sancti dei} with $j=k=1$ and $- \Lambda_{2}$ with Theorem \ref{thm:Lambda_k} with $n=1,k_1=2$  
to obtain \begin{equation} \label{Abel17} \sum_{\substack{ P\in \mathbb Z[t],|P|\leq H \\  \deg(P)=d}} \left|
\sum_{\substack{1\leq m\leq x\\P(m)>0 } } (\Lambda \ast \Lambda)(P(m)) - \mathfrak S_{P}(x) x  \log H \right|^2  \ll  
\frac{H^{d+1} (x \log H)^2} {\log x}\end{equation}    for $(\log H)(\log \log H) \leq x \leq (\log H)^\delta$.

For a prime $p$ and a positive integer $\alpha$ the quantity $(\Lambda\ast \Lambda)(p^\alpha)$ vanishes when $\alpha=1$ and is 
$O((\log (p^\alpha))^2)$ otherwise. Thus, the contribution of $m,P$ such that $P(m)$ is a  power of a prime is  $$
\sum_{\substack{ P\in \mathbb Z[t],|P|\leq H \\  \deg(P)=d}}  \left| \sum_{\substack{1\leq m\leq x\\P(m)=p^\alpha } } 
(\Lambda \ast \Lambda)(P(m)) \right|^2  \ll x (\log H)^2 \sum_{m\leq x} 
\sum_{\substack{ P\in \mathbb Z[t],|P|\leq H \\  \deg(P)=d,P(m)=p^\alpha } } (\Lambda \ast \Lambda)(P(m)).$$
This is $ \ll H^{d+1/2+\epsilon} $ for any fixed $\epsilon>0$ since $(\Lambda \ast \Lambda)(P(m))=O(H^\epsilon)$,
$x\leq (\log H)^\delta$ and  \begin{align*} &\#\{ P\in \mathbb Z[t],|P|\leq H:\deg(P)=d,P(m)=p^\alpha, \alpha =2\} \\ 
\leq  &\sum_{2\leq \alpha \ll \log H}  \sum_{p \ll (x^d H)^{1/\alpha}} \#\{ P\in \mathbb Z[t],|P|\leq H:
\deg(P)=d,P(m)=t\}\ll (\log H) (x^d H)^{1/2} H^d \end{align*} uniformly in $m$. Since $\Lambda\ast \Lambda$ is supported 
on integers $m$ with $\omega(m)\leq 2 $   we deduce  $$ \sum_{\substack{ P\in \mathbb Z[t],|P|\leq H \\  \deg(P)=d}} 
\left| \sum_{\substack{  m\leq x, P(m)>0\\ P(m) \textrm{ is } E_2 } }  (\Lambda \ast \Lambda)(P(m)) - \mathfrak S_{P}(x) x  
\log H  \right|^2  \ll  \frac{H^{d+1} (x \log H)^2}  {\log x}$$  when  $x\leq \log H$. To finish the proof of the first claim of 
Theorem \ref{thm:bounia2} we use the fact that  \[  \omega(t)=2 \Rightarrow  (\Lambda\ast  \Lambda)(t)=2 \prod_{ \textrm{ prime } 
\ell  \mid t} \log \ell =2 \c L_2(t) \]for $t=P(m)$. The second claim of Theorem \ref{thm:bounia2} follows from the first and Chebychev's inequality.  The third  claim of Theorem \ref{thm:bounia2} follows from the second and \cite[Lemma 4.11]{MR485740}, which shows that $100\%$ of Bouniakowsky polynomials  $P$ satisfy $\mathfrak S_P(x)\gg  (\log \log x)^{1-\deg(P)}$.
 \end{proof}\subsection{Proof   of Theorem \ref{thm:bounia33}} \begin{proof}The idea is   the same as in  the proof of  Theorem  \ref{thm:bounia2}, the only difference lying in  the convolution identity at the start of the argument.
More specifically, we seek an identity that expresses the triple convolution $\Lambda_3$ as
a linear combination of functions $L^a  \underbrace{\Lambda  \ast \dots \ast \Lambda}_{b \text{ times}}$. We have 
\begin{align*}
\Lambda_3
&= L \Lambda_2 + \Lambda_2\ast \Lambda
\\&=L (L \Lambda+ \Lambda\ast \Lambda) 
+ (L \Lambda+ \Lambda\ast \Lambda)  \ast \Lambda
\\ &=
L^2 \Lambda+ L (\Lambda\ast \Lambda  )
+L \Lambda\ast \Lambda +
\Lambda\ast \Lambda\ast \Lambda.
\end{align*}
Noting that $L (f \ast g )= ((Lf)\ast g)+  (f\ast (Lg))$
for any arithmetic functions $f,g$, we infer that 
$L(\Lambda \ast \Lambda)= 2 ( \Lambda \ast (L\Lambda))$,
hence, 
$$
\Lambda_3
-
L^2 \Lambda 
-\frac{3}{2}L( \Lambda\ast \Lambda )=
\Lambda\ast \Lambda\ast \Lambda.$$
We can then deal with $L^2 \Lambda$
by Lemma \ref{Davide Perez - Subvenite sancti dei}
and Theorem \ref{thm:Lambda_k}
with $n=1=k_1$, while, 
$L ( \Lambda\ast \Lambda )$ is dealt with by 
\eqref{Abel17} and Lemma \ref{Davide Perez - Subvenite sancti dei}. We deduce that $$\sum_{\substack{ P\in \mathbb Z[t],|P|\leq H \\  \deg(P)=d}} 
\left|
\sum_{\substack{1\leq m\leq x\\P(m)>0 } } 
(\Lambda \ast\Lambda \ast \Lambda)(P(m))
- \frac{1}{2}
\mathfrak S_{ P}(x) x  (\log H)^2
\right|^2  \ll  \frac{H^{d+1} (x (\log H)^2)^2} 
{\log x}$$ for $(\log H)(\log \log H) \leq x \leq (\log H)^\delta$. The terms with $\omega(P(m))=1$ can 
be absorbed into the error term as was done in the proof of the proof of  Theorem 
\ref{thm:bounia2}. 

As $\Lambda \ast\Lambda \ast \Lambda$ is supported on integers $m$ with $\omega(m)\leq 3$ it remains
  remains to show that the terms with $\omega(P(m))=2$ contribute to the error term. We note that the triple 
  convolution  vanishes
when evaluated 
at $p^\alpha q^\beta$ for primes $p\neq q $
and strictly positive integers $\alpha,\beta$.
Furthermore, the contribution of
terms with $\omega(P(m))=p^\alpha q^\beta$ and $\alpha
\geq 2,\beta\geq 2 $  
can be proved to be  $O(H^{d+1/2+\epsilon})$
as in the end of the proof of Theorem 
\ref{thm:bounia2}.Since
$(\Lambda \ast \Lambda \ast \Lambda)(p^2 q) =3
(\log p)^2\log q $, 
the contribution of the terms with $P(m)=p^2 q $ 
is 
$$\ll 
\sum_{\substack{ P\in \mathbb Z[t],|P|\leq H \\  \deg(P)=d}} 
\left|
\sum_{\substack{m\leq x\\P(m)\in \c E} } 
(\Lambda \ast \Lambda \ast \Lambda)(P(m))
\right|^2  \ll  x (\log H)^4
\sum_{\substack{ m \leq x \\ 
p \ll (x^dH)^{1/2}
}} (\log p)^2
\sum_{\substack{ P\in \mathbb Z[t],|P|\leq H \\  
\deg(P)=d,p^2 \mid  P(m)\\
P(m)/p^2 \textrm{ prime}} } 1 $$
by the bounds $
\sum_{m\leq x} 
(\Lambda \ast \Lambda \ast \Lambda)(P(m)) \ll x (\log H)^3$
and $\log (P(m)/p^2) \ll \log H$.
In the sum over $P$ in the right-hand side we 
write  $P=\sum_{j=0}^d c_j t^j $ and fix 
all $c_j$ except $c_0$.
As $(x^d H)^{1/2}\ll H^{1/2+\epsilon}$ for all $\epsilon$,
the sum over $c_0$ is trivially bounded by $\ll 1+ H/p^2$, 
hence, the sum over all $p> (\log H)^2$ gives the overall bound
$$  \ll  x^2 (\log H)^4
\sum_{(\log H)^2 < p \ll (x^dH)^{1/2} }(\log p)^2
H^d (1+ H/p^2)\ll 
x^2 H^{d+1}(\log H)^4
,$$ which is satisfactory. In the remaining range $p\leq (\log H)^2$
we fix   all $c_j$ except $c_0$ and we bring in  
the variable   $N=\sum_{j\neq 0} c_j m^j$. Then 
 the sum over $c_0$ becomes
 $$\#\{|c_0| \leq H :p^2 \mid c_0+N, |c_0+N|/p^2 \textrm{ prime}\}
 = \#\left\{m\in \mathbb Z: \Big|m- \frac{N}{p^2}\Big| \leq \frac{H}{p^2}, |m| \textrm{ prime}\right\} \ll \frac{H}{p^2 \log H}
 $$ by the Brun--Titchmarsh theorem.
 Thus, the overall contribution becomes 
$$ \ll x^2 (\log H)^4
\sum_{
p \ll (x^dH)^{1/2}  
 } (\log p)^2 \frac{H^{d+1} }{p^2 \log H}
\ll x^2 H^{d+1} (\log H)^3.$$
The first claim of 
Theorem \ref{thm:bounia33}
can now be proved by applying  
\[  \omega(t)=3 \Rightarrow 
(\Lambda\ast \Lambda\ast  \Lambda)(t)=6 \prod_{ \textrm{ prime } 
\ell  \mid t} \log \ell =6 \c L_2(t)
\]with $t=P(m)$. The second and third claim of
Theorem \ref{thm:bounia33}
can be proved as in the analogous statements in 
Theorem \ref{thm:bounia2}.
\end{proof}

\subsection{$E_k$ for $k\geq 4$.}
Define $\Lambda^{(1)}=\Lambda$ and by induction for all positive integers $a$ let $\Lambda^{(a+1)}= \Lambda \ast \Lambda^{(a)}$.
Our proofs of Theorem \ref{thm:bounia2}  and
Theorem \ref{thm:bounia33}
start by making a passage to the $\Lambda_k$ 
functions by an identity that expresses $\Lambda_k$ as a finite linear 
combination of $\Lambda^{(a)}L^b$ and then applying Theorem \ref{thm:Lambda_k}. In our last 
lemma, we show that this strategy does not work for detecting
$E_k$ numbers 
when $k \geq 4$.
\begin{lemma} \label{paparity}For any integer $k \geq 4$, 
the  function $\Lambda_k$ cannot be expressed as a finite linear combination of the functions $\Lambda^{(a)}L^b$ for integers 
$a,b\geq 0$
using real coefficients.
\end{lemma}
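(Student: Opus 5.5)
The plan is to test a hypothetical identity on integers of the form $n=pq$ with primes $p\neq q$, and compare coefficients of polynomials in $(\log p,\log q)$. Suppose
\[
\Lambda_k=\sum_{(a,b)\in F} c_{a,b}\,\Lambda^{(a)}L^b
\]
for a finite set $F$ and real coefficients $c_{a,b}$, where $\Lambda^{(0)}$ is the convolution identity (supported on $1$).

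\textbf{Step 1: both sides at $n=pq$.} Put $x_1=\log p$ and $x_2=\log q$.
\begin{itemize}
\item Since $\Lambda$ is supported on prime powers, the only factorisations of $pq$ into $a$ factors each a prime power with positive $\Lambda$ are $a=2$, namely the ordered pairs $(p,q)$ and $(q,p)$. Hence $\Lambda^{(a)}(pq)=2x_1x_2$ if $a=2$ and $0$ otherwise.
\item The right-hand side at $pq$ is therefore $2x_1x_2\sum_b c_{2,b}(x_1+x_2)^b$.
\item From $\Lambda_k=\mu\ast L^k$ and inclusion--exclusion over the four divisors,
\[
\Lambda_k(pq)=(x_1+x_2)^k-x_1^k-x_2^k=\sum_{\substack{\alpha_1+\alpha_2=k\\ \alpha_i\geq1}}\binom{k}{\alpha_1}x_1^{\alpha_1}x_2^{\alpha_2}.
\]
\end{itemize}

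\textbf{Step 2: upgrade to a polynomial identity.} The difference of the two sides is a polynomial $D(x_1,x_2)$ vanishing on $S\times S$ minus the diagonal, where $S=\{\log p\}$ is infinite. Then $(x_1-x_2)D$ vanishes on the whole grid $S\times S$. A two-variable polynomial vanishing on an infinite grid is identically zero: fix $x_2\in S$ and use infinitely many roots in $x_1$, then argue coefficientwise. Hence $D\equiv0$.

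\textbf{Step 3: compare homogeneous parts and conclude.} Take the degree-$k$ homogeneous part and divide by $x_1x_2$. This gives
\[
\sum_{\beta_1+\beta_2=k-2}\binom{k}{\beta_1+1}x_1^{\beta_1}x_2^{\beta_2}=2c_{2,k-2}(x_1+x_2)^{k-2}.
\]
So the ratio of coefficients $\binom{k}{\beta_1+1}\big/\binom{k-2}{\beta_1}=k(k-1)/((\beta_1+1)(\beta_2+1))$ would have to be independent of $\beta_1$. This forces $(\beta_1+1)(k-1-\beta_1)$ to be constant for $0\leq\beta_1\leq k-2$.
\begin{itemize}
\item For $k=3$ this holds, consistent with the identity found in the proof of Theorem \ref{thm:bounia33}.
\item For $k\geq4$, taking $\beta_1=0$ gives $k-1$ and $\beta_1=1$ gives $2(k-2)$. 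These differ when $k\neq3$, which is a contradiction.
\end{itemize}

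The main point needing care is Step 2, since the identity is only known at logarithms of primes. The grid-minus-diagonal argument handles it. The rest is a finite coefficient comparison. No input from earlier analytic results is needed beyond the definitions and $\Lambda_k=\mu\ast L^k$.
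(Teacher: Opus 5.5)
Your proof is correct, but it takes a different route from the paper's. Both arguments rest on evaluating at $pq$, where only $\Lambda^{(2)}$ survives.

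The paper does not evaluate $\Lambda_k$ directly. It first uses $\Lambda_k=L\Lambda_{k-1}+\Lambda\ast\Lambda_{k-1}$ to pass to $\Lambda\ast\Lambda_{k-1}$. With $x=\log p$ and $y=\log q$, this function takes the value $xy(x^{k-2}+y^{k-2})$ at $pq$, so the paper must show that $x^{k-2}+y^{k-2}$ is not a polynomial in $x+y$. It does so by fixing $y=\log q$ and extending the identity to all real $x$, which gives $P(z)=(z-y)^{k-2}+y^{k-2}$. Then $P(0)$ (for $k$ even) or $P'(0)$ (for $k$ odd) would depend on $y$, a contradiction.

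You evaluate $\Lambda_k$ itself at $pq$ and upgrade to a genuine two-variable identity via the grid-minus-diagonal argument. Comparing degree-$k$ homogeneous parts then reduces everything to constancy of $\binom{k}{\beta_1+1}/\binom{k-2}{\beta_1}$, which fails at $\beta_1=0,1$ once $k\geq 4$.

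What your route buys:
\begin{enumerate}[(i)]
\item It needs no parity case split.
\item It shows transparently why $k\leq 3$ is not obstructed, consistent with the $\Lambda_3$ identity in the proof of Theorem~\ref{thm:bounia33}.
\item Most substantively, it avoids the paper's opening reduction. Concluding that $\Lambda\ast\Lambda_{k-1}$ lies in the span of the $\Lambda^{(a)}L^b$ tacitly requires $L\Lambda_{k-1}$ to lie in that span. This holds for $k=4$ by the $\Lambda_3$ identity, but is not justified for $k\geq 5$.
\end{enumerate}

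In exchange, the paper's route ends with a slightly slicker one-variable contradiction and never needs the two-variable vanishing lemma.
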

\begin{proof}  If $\Lambda_k$ is a  
linear combination of 
$L^a \Lambda^{(b)}$, then, by 
$\Lambda_k=L \Lambda_{k-1} + \Lambda \ast \Lambda_{k-1}$
so is $\Lambda \ast \Lambda_{k-1}$.
Hence, there are $c_{a,b}\in \mathbb R$ and $N\in \mathbb N$ 
such that 
$$\Lambda \ast \Lambda_{k-1} = \sum_{0\leq a,b \leq N} c_{a,b}
L^a \Lambda^{(b)}.$$ Let $p,q$ be two distinct primes
and estimate both sides at $pq$. We obtain that 
$$ (\log p)(\log q)((\log p)^{k-2} + (\log q)^{k-2})
 =
 (\Lambda \ast \Lambda_{k-1})(pq)$$ equals $$
 \Lambda^{(2)}
\sum_{0\leq a \leq N} c_{a,2} ( (\log p)+(\log q))^a 
=2 (\log p)(\log q)
\sum_{0\leq a \leq N} c_{a,2} ( (\log p)+(\log q))^a$$
because $\Lambda^{(b)}(pq)=0$ when $b\neq 2 $. Letting 
$x=\log p$ and $ y=\log q$ we infer that $$x^{k-2} + y^{k-2}= P(x+y)
$$ where $P$ is a polynomial with coefficients in $\mathbb R$.

If we fix any prime $q$ then we fix $y$
and the above identity can be seen 
as a polynomial identity in $x$ that holds for infinitely many 
different real numbers, hence, it holds for all real numbers $x$.
The change of variables  $z=x+y$ leads to  
$$P(z)=(z-y)^{k-2}+ y^{k-2},$$ which holds for all $z\in \mathbb R$ 
and all $y$ of the form $\log q$, where $a$ is a prime number.
We can now get a contradiction as follows:
If $k$ is even then $P(0)= 2 y^{k-2}$, 
thus, $P(0)$ takes more than one value,
a contradiction. If $k$ is odd   
then 
$$ \lim_{z\to 0} f'(z)=\lim_{z\to 0} 
(k-2)(z-y)^{k-3}
=(k-2)(-y)^{k-3},$$
which depends on $y$ and   therefore assumes more than 
a single value.\end{proof}

\end{document}